\newtheorem{thm}{Theorem}[section]
\newtheorem{defn}[thm]{Definition}
\newtheorem{lem}[thm]{Lemma}
\newtheorem{conj}{Conjecture}
\newtheorem{prop}[thm]{Proposition}
\newtheorem{claim}[thm]{Claim}
\newcommand{\C}{\mathcal{C}}
\newcommand{\F}{\mathbb{F}}
\newcommand{\E}{\mathbb{E}}
\renewcommand{\P}{\mathrm{Pr}}
\renewcommand{\l}{\left}
\renewcommand{\r}{\right}
\title{Off-diagonal Ramsey numbers  for slowly growing hypergraphs}
\author{Sam Mattheus\footnote{Department of Mathematics, Vrije Universiteit Brussel, Pleinlaan 2, 1050 Brussel, Belgium. {\tt sam.mattheus@vub.be}. Research supported by a postdoctoral fellowship 1267923N from the Research Foundation Flanders (FWO).} \and Dhruv Mubayi\footnote{Department of Mathematics, Statistics and Computer Science, University of Illinois Chicago, Chicago, IL 60607, USA. {\tt mubayi@uic.edu}. Research supported by NSF grants DMS-1952767, DMS-2153576 and a Simons Fellowship.} \and Jiaxi Nie\footnote{School of Mathematics, Georgia Institute of Technology,
Atlanta, GA 30332 USA. {\tt jnie47@gatech.edu}.} \and Jacques Verstra\"{e}te \footnote{Department of Mathematics, University of California, San Diego, La Jolla, CA 92093, USA. {\tt jacques@ucsd.edu}. Research supported by NSF grant DMS-1952786.}}
\date{\today}
\begin{document}

\maketitle


\begin{abstract}
For a $k$-uniform hypergraph $F$ and a positive integer $n$, the Ramsey number $r(F,n)$ denotes the minimum $N$ such that every $N$-vertex $F$-free $k$-uniform hypergraph contains an independent set of $n$ vertices. A  hypergraph is {\em slowly growing} if there is an ordering $e_1,e_2,\dots,e_t$ of its edges such that  
$|e_i \setminus \bigcup_{j = 1}^{i - 1}e_j| \leq  1$ for each $i \in \{2, \ldots, t\}$. We prove that if $k \geq 3$ is fixed and $F$ is any non $k$-partite slowly growing $k$-uniform hypergraph, then for $n\ge2$,   
\[ r(F,n) = \Omega\Bigl(\frac{n^k}{(\log n)^{2k - 2}}\Bigr).\]
In particular, we deduce that  the off-diagonal Ramsey number $r(F_5,n)$ is of order $n^{3}/\mbox{polylog}(n)$, where $F_5$ is the triple system $\{123, 124, 345\}$. This is the only 3-uniform Berge triangle for which the  polynomial power of its off-diagonal Ramsey number was not previously known.  
Our constructions use pseudorandom graphs,  martingales, and hypergraph containers.
\end{abstract}

\bigskip

\section{Introduction}
A hypergraph is a pair $(V,E)$ where $V$ is a set, whose elements are called vertices, and $E$ is a family of nonempty subsets of $V$, whose elements are called edges. A $k$-uniform hypergraph ($k$-graph for short) is a hypergraph whose edges are all of size $k$. An {\em independent set} of a hypergraph $F$ is a subset of $V(F)$ which does not contain any edge of $F$.

Given a $k$-graph $F$, the {\em off-diagonal Ramsey number} $r(F,n)$ is the minimum integer such that every $F$-free $k$-graph on $r(F,n)$ vertices has an independent set of size $n$.  Ajtai, Koml\'os and Szemer\'edi~\cite{AKS1980}  proved the upper bound $r(K_3,n)=O(n^2/\log n)$, and Kim~\cite{Kim1995} proved the corresponding lower bound $r(K_3,n)=\Omega(n^2/\log n)$. The current state-of-the-art results are due to Fiz Pontiveros, Griffiths and Morris~\cite{fiz2020triangle} and Bohman and Keevash~\cite{bohman2021dynamic}, who determine $r(K_3,n)$ up to a small constant factor:
$$
\l(\frac{1}{4}-o(1)\r)\frac{n^2}{\log n}\le r(K_3,n)\le \l(1+o(1)\r)\frac{n^2}{\log n}.
$$

For larger cliques, the current best general lower bounds are obtained by Bohman and Keevash~\cite{bohman2010early} strengthening earlier bounds of Spencer~\cite{spencer1975ramsey,spencer1977asymptotic}. On the other hand, the current best upper bounds are proved by Li, Rousseau and Zang~\cite{li2001asymptotic} by extending ideas of Shearer~\cite{shearer1983note}, which improve earlier bounds of Ajtai, Komlós and Szemerédi~\cite{AKS1980}. These bounds are as follows: for $s\ge 3$, there exists constant $c_1(s)>0$ such that
$$
c_1(s)\frac{n^{\frac{s+1}{2}}}{(\log n)^{\frac{s+1}{2}-\frac{1}{s-2}}}\le r(K_s,n)\le (1+o(1))\frac{n^{s-1}}{(\log n)^{s-2}}.
$$

Recently, the first and fourth authors~\cite{mattheus2024asymptotics} determined the asymptotics of $r(K_4,n)$ up to a logarithmic factor by proving the following lower bounds.
\begin{thm}[Theorem 1, \cite{mattheus2024asymptotics}]
As $n\rightarrow\infty$,
$$
r(K_4,n)=\Omega\l(\frac{n^3}{(\log n)^4}\r).
$$
\end{thm}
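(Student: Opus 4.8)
The plan is to produce, for infinitely many $N$, a $K_4$-free graph $G$ on $N$ vertices with
\[
\al(G) = O\l(N^{1/3}(\log N)^{4/3}\r),
\]
and then to invert this: taking $n$ of the order of the right-hand side yields a $K_4$-free graph on $N = \Omega\l(n^3/(\log n)^4\r)$ vertices with no independent set of size $n$, whence $r(K_4,n) > N$. (If the construction only outputs $N$ equal to a fixed power of a prime power $q$, the gaps are filled in the usual way, using that consecutive prime powers lie within a $1+o(1)$ factor and that passing to an induced subgraph only decreases the independence number.) The structural fact to exploit throughout is that a graph is $K_4$-free precisely when the neighbourhood of every vertex induces a triangle-free subgraph; so we want $G$ to be roughly $d$-regular with each $N_G(v)$ a \emph{pseudorandom} triangle-free graph, and with $G$ itself pseudorandom in a strong local sense. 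For a regular pseudorandom construction meeting the target the degree is forced to be $d = N^{2/3+o(1)}$: triangle-freeness of the neighbourhoods together with the Ramsey bound $r(K_3,s) = \Theta(s^2/\log s)$ keeps the independence number from dropping below $\approx \sqrt{d\log d}$, while we need the ``global'' contribution $\approx (N/d)\log d$ to be $O(N^{1/3}\,\mathrm{polylog}\,N)$.

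First I would construct the pseudorandom skeleton algebraically over a finite field $\F_q$ -- using the geometry of a Hermitian unital, or of an associated generalized quadrangle -- so that $G$ has $N = q^{\Theta(1)}$ vertices, is vertex-transitive and $d$-regular with $d = N^{2/3+o(1)}$, is genuinely $K_4$-free, and has neighbourhoods that are quasirandom triangle-free graphs with near-optimal independence number $\al(N_G(v)) = O\l(d^{1/3}(\log d)^{O(1)}\r)$. What one really needs downstream is not a single eigenvalue bound but uniform control of codegrees: any two (indeed any small set of) vertices have close to the expected number of common neighbours, and hence every vertex subset $W$ with $|W| \gg N/d \approx N^{1/3}$ spans close to the expected $\Theta\l(|W|^2 d/N\r) \gg |W|$ edges. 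This is essential because the naive spectral/Hoffman bound only gives $\al(G) = O(\lambda N/d)$, which with $\lambda = N^{1/3+o(1)}$ is $N^{2/3+o(1)}$ -- a polynomial factor short of the target.

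Second I would bound $\al(G)$ by the hypergraph container method, fed by the codegree and edge-supersaturation estimates above. Applying the container lemma to $G$ and using that every $W$ above the threshold $N^{1/3}\,\mathrm{polylog}\,N$ induces enough edges, one covers all independent sets of $G$ by a family of containers each a constant factor smaller than $W$; iterating $\Theta(\log N)$ times -- with each round, especially near the threshold, costing logarithmic factors -- shrinks the containers to size $O\l(N^{1/3}(\log N)^{4/3}\r)$, which bounds $\al(G)$. The delicate point is that each iteration is applied to $G$ restricted to the current container, so one must know that this restriction still satisfies the codegree estimates needed to continue; since the containers are not random sets, this is where \emph{martingales} enter: one exposes the random choices inside the container construction one at a time and uses Azuma--Hoeffding/Freedman-type inequalities to show that, along the entire chain of restrictions, the pseudorandomness parameters degrade by at most the predicted polylogarithmic factor with high probability. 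The accumulation of these logarithmic losses over $\Theta(\log N)$ rounds, together with the extra logarithmic factor already present in the triangle-free Ramsey bound for the neighbourhoods, is what fixes the container-size exponent at $4/3$, hence the $(\log n)^4$ in the statement.

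The main obstacle is the first step: exhibiting an explicit $K_4$-free graph that is simultaneously dense (degree $N^{2/3+o(1)}$) and pseudorandom, i.e.\ whose vertex neighbourhoods are quasirandom triangle-free Ramsey graphs. This is a Ramsey--Tur\'{a}n-type requirement that no random or random-greedy construction is known to meet -- the $K_4$-free process, for instance, only yields $r(K_4,n) = \Omega\l(n^{5/2}/\mathrm{polylog}\,n\r)$ -- and it is exactly here that the algebraic geometry of the Hermitian unital / generalized quadrangle does the essential work. A secondary, more technical difficulty is making the iterated-restriction analysis in the container step (via the martingale argument) tight enough that the final polylogarithmic loss carries the exponent $4/3$ and not something larger.
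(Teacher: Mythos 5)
Your proposal correctly names several of the ingredients (the Hermitian unital, supersaturation, containers, martingales), but its overall architecture is not that of \cite{mattheus2024asymptotics} (whose argument is what Sections 2--6 of the present paper generalize), and the step you place at the centre --- directly certifying $\alpha(G)=O(N^{1/3}(\log N)^{4/3})$ for an explicit pseudorandom $K_4$-free graph $G$ --- cannot work for the graph the algebra actually supplies, and in general is essentially the open problem of constructing explicit Ramsey graphs. The actual construction is a \emph{random block} graph: the vertices are the $\sim q^4$ secant lines of the unital, each pencil of $q^2$ concurrent secants is a block that is randomly bipartitioned into a complete bipartite graph, and $K_4$-freeness follows from the absence of O'Nan configurations together with the bipartiteness of the blocks --- not from any triangle-free pseudorandomness of vertex neighbourhoods. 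This graph $G$ has $N\sim q^4$ vertices but $\alpha(G)\ge q^2/2\sim \sqrt{N}/2$, since one colour class of a single block is an independent set; correspondingly the container iteration stalls at containers of size $\Theta(q^2)$, because below that scale a typical block meets the current vertex set in $O(1)$ vertices and there is no supersaturation left to drive another round. So no amount of iteration shrinks the containers to $N^{1/3}\,\mathrm{polylog}\,N$, and your second step fails as stated.

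The missing idea is the count-and-sample step. Containers are used not to bound $\alpha(G)$ but to \emph{count} the independent sets of size $t=q(\log q)^2$: every such set lies in one of $\exp(O(q(\log q)^2))$ containers of size $O(q^2)$, so there are at most $(cq^2/t)^t$ of them. One then takes a random induced subgraph $G[W]$ with vertex-sampling probability $p=\Theta(t/q^2)$ and deletes one vertex from each surviving independent $t$-set; in expectation at most one survives, so the resulting graph is $K_4$-free with $\Theta(pq^4)=\Theta(q^3(\log q)^2)=\Theta(t^3/(\log t)^4)$ vertices and independence number below $t$. Two further corrections. The martingale is not used to track pseudorandomness along the chain of container restrictions (the iteration needs no fresh randomness once an instance of the random bipartitions is fixed); it is used once, before any containers, to prove balanced supersaturation for \emph{all} large vertex subsets simultaneously via a union bound, exactly as in Lemma 4.2 here. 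And the exponent $4$ does not arise from losses accumulating over $\Theta(\log N)$ container rounds; it arises because the container count $\exp(O(q(\log q)^2))$ forces $t\ge q(\log q)^2$, after which comparing $t^3=q^3(\log q)^6$ with the final vertex count $q^3(\log q)^2$ yields the factor $(\log t)^4$.
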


In this paper, we prove some hypergraph versions of these results. A {\em Berge triangle} is a hypergraph consisting of three edges $e_1,~e_2$ and $e_3$ such that there exists three vertices $x,~y$ and $z$ with the property that $\{x,y\}\subset e_1$, $\{y,z\}\subset e_2$, and $\{x,z\}\subset e_3$. It is easy to check that there are only four different 3-uniform Berge triangles: $LC_3$ (loose cycle of length 3), $TP_3$ (tight path on three edges and five vertices), $F_5$, and $K^{3-}_4$ (3-uniform clique on four vertices minus an edge), as shown from left to right in Figure~\ref{fig:BergeTriangle}. It is natural to consider the problem of determining the off-diagonal Ramsey numbers for 3-uniform Berge triangles since they are in some sense the smallest non-trivial hypergraphs that provide a natural extension of $r(K_3,n)$.

\begin{figure}[h]
    \centering
    \includegraphics[scale=0.4]{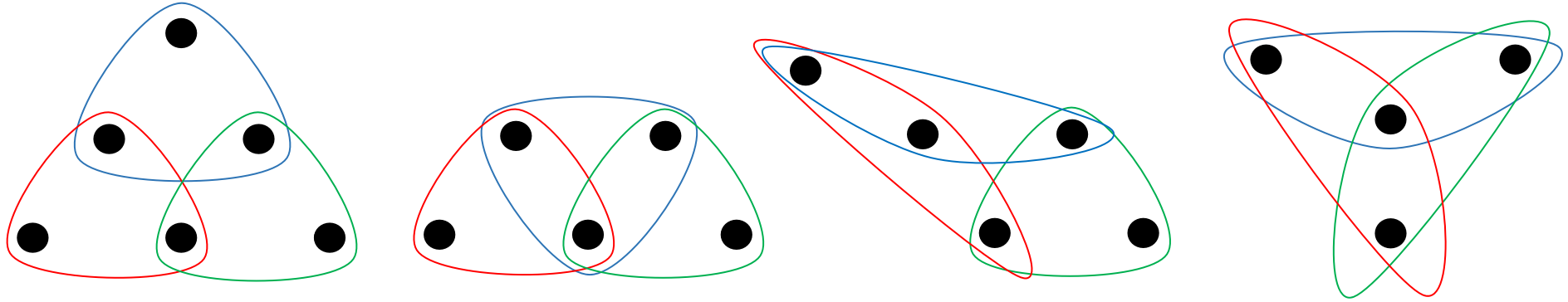}
    \caption{From left to right: $LC_3$, $TP_3$, $F_5$ and $K^{3-}_4$.}
    \label{fig:BergeTriangle}
\end{figure}

The off-diagonal Ramsey numbers for $TP_3$ and $LC_3$ have been determined up to a logarithmic factor: for $TP_3$, a result of Phelps and Rödl~\cite{phelps1986steiner} shows that $c_1n^2/\log n\le r(TP_3,n)\le c_2n^2$; for $LC_3$, Kostochka, the second author and the fourth author~\cite{kostochka2013hypergraph} showed that $c_1n^{3/2}/(\log n)^{3/4}\le r(LC_3,n)\le c_2 n^{3/2}$. It seems plausible to conjecture that for some constant $c$,
\[ r(TP_3,n) \leq \frac{cn^2}{\log n} \quad \mbox{ and } \quad r(LC_3,n) \leq \frac{cn^{\frac{3}{2}}}{(\log n)^{\frac{3}{4}}}.\]
It is conjectured explicitly in~\cite{kostochka2013hypergraph} that $r(LC_3,n) = o(n^{3/2})$ and the question of determining the order of magnitude of $r(TP_3,n)$ was posed in~\cite{CM}. It was also shown in~\cite{CM} that $r(TP_4, n)$ has order of magnitude $n^2$, leaving $TP_3$ as the only tight path for which the order of magnitude of $r(TP_s, n)$ remains open. We remark that if one can prove that every $n$-vertex $TP_3$-free 3-graph with average degree $d>1$  has an independent set of size at least $\Omega(n \sqrt {\log d / d})$, then  this implies that $r(TP_3, n) = \Theta(n^2/\log n)$.

The problem for $K^{3-}_4$ is interesting in the sense that it is the smallest hypergraph whose off-diagonal Ramsey number is at least exponential: Erd\H os and Hajnal~\cite{EH72} proved $r(K^{3-}_4, n) = n^{O(n)}$ and R\"odl (unpublished) proved $r(K^{3-}_4,n)\ge 2^{\Omega(n)}$. More recently,  Fox and He~\cite{fox2021independent} showed that $r(K^{3-}_4,n)=n^{\Theta(n)}$. 

The problem for $F_5$, however, is not very well studied: a result of Kostochka, the second author and the fourth author~\cite{kostochka2014independent} implies that $r(F_5,n)\le c_1n^3/\log n$, and the standard probabilistic deletion method shows that $r(F_5,n)\ge c_2n^{2}/\log n$. In this paper, we fill this gap by showing that $r(F_5,n)=n^{3}/\mbox{polylog}(n)$. This is a consequence of a more general theorem that we will prove.

Building upon techniques in~\cite{mattheus2024asymptotics}, we prove lower bounds for the off-diagonal Ramsey numbers of a large family of hypergraphs. A $k$-graph $F$ is {\em slowly growing} if its edges can be ordered as $e_1,\dots,e_t$ such that
$$
\forall \, i \in \{2,\dots,t\},~\Bigl|e_i\setminus \bigcup_{j = 1}^{i - 1} \, e_j\Bigr|\le 1.
$$
We use this terminology to describe the fact that at most one new vertex is added when we add a new edge in the ordering. Further, $F$ is $k$-partite, or {\em degenerate}, if its vertices can be partitioned into $k$ sets $V_1,\dots, V_k$ such that each edge intersects each $V_i$, $1\le i\le k$, in exactly one vertex. Otherwise, $H$ is {\em non-degenerate}. The three hypergraphs $TP_3$, $F_5$ and $K_4^{3-}$ in Figure~\ref{fig:BergeTriangle} are slowly growing, whereas the first is not. The last two are non-degenerate.

In this paper, we obtain the following result for non-degenerate slowly growing hypergraphs.
\begin{thm}\label{theorem:main}
For every $k\ge 3$, there exists a constant $c>0$ such that for every slowly growing non-degenerate $k$-graph $F$ and all integers $n\ge 2$
$$
r(F,n)\ge \frac{cn^k}{(\log n)^{2k-2}}.
$$
\end{thm}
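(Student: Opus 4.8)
The plan is to build a random $F$-free $k$-graph on $N = \Theta(n^k/(\log n)^{2k-2})$ vertices with no independent set of size $n$, following the strategy of Mattheus–Verstraëte for $r(K_4,n)$. First I would reduce to a concrete extremal structure. Since $F$ is slowly growing, order its edges $e_1,\dots,e_t$ with $|e_i\setminus\bigcup_{j<i}e_j|\le 1$; non-degeneracy forces, somewhere along this ordering, an edge that closes an "odd" configuration, and I expect one can extract a small sub-structure (a Berge-odd-cycle, ultimately reducing to the $F_5$-like obstruction) whose presence we must forbid. The key point is that forbidding $F$ as a subhypergraph is implied by forbidding something controlled by the link graphs of vertices: a slowly-growing $F$ is built by repeatedly adding a vertex joined by an edge to previously-used vertices, so an embedding of $F$ corresponds to a short "walk" in an auxiliary graph on $V(F)$, and we can preclude all such walks by ensuring each pair of vertices lies in few common edges and the "codegree graph" is locally sparse and triangle-like-free in the appropriate sense.

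The construction itself would go as follows. Start from a pseudorandom (e.g. Cayley or algebraically defined) graph $G$ on $N$ vertices with edge density $p=\Theta((\log n)^{c'}/n)$ chosen so that $G$ has independence number $O(n/\mathrm{polylog})$ — this is the source of the $n^k$ versus $n^2$ gap, exactly as in \cite{mattheus2024asymptotics}. Then form a $k$-graph whose edges are selected from the $k$-cliques (or suitable $k$-subsets aligned with $G$) of $G$ by an independent random sparsification with probability $q$, tuned so that the expected number of copies of $F$ is $o(N)$ and every vertex still has large enough degree. Remove a vertex from each copy of $F$ (the probabilistic deletion method). The independence number of the resulting $k$-graph is controlled by combining the independence number of $G$ with a union bound over potential independent sets; here the pseudorandomness of $G$ is essential, and martingale/concentration arguments (Azuma, or the bounded-differences inequality, applied to the edge-count functional vertex-by-vertex) show the relevant random variables concentrate, so that with positive probability no $n$-set is independent.

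The main obstacle — and the place where hypergraph containers enter — is ruling out large independent sets \emph{after} the deletion step, since deleting vertices can in principle create new large independent sets and the naive union bound over all $\binom{N}{n}$ candidate sets is too weak when $q$ is this small. I would handle this by running the container method on the $F$-free $k$-graph (or on $G$ itself) to replace the $2^N$ candidate independent sets by a subexponential family of containers, each of size $o(N)$ in the relevant weighted sense, and then only union-bound over containers; the slowly-growing hypothesis is what keeps the supersaturation / container hypotheses verifiable, because it bounds the number of copies of $F$ through any fixed vertex set. Balancing the three parameters $N$, $p$, $q$ against the container size and the deletion loss is the calculation that pins down the exponent $2k-2$ on the logarithm, and I expect that to be the most delicate bookkeeping; the conceptual content is that $G$'s density $(\log n)/n$ raised to the relevant power, combined with two logarithmic losses per "dimension" (one from the Ramsey bound for $G$, one from containers), yields precisely $(\log n)^{2k-2}$.
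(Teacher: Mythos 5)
Your proposal has the right ingredients in outline (an algebraically defined pseudorandom graph, balanced supersaturation via concentration, hypergraph containers, a final random sampling), but the central mechanism you propose for achieving $F$-freeness is wrong, and this is fatal to the quantitative bound. You plan to sparsify the edge set with probability $q$ ``tuned so that the expected number of copies of $F$ is $o(N)$'' and then delete a vertex from each surviving copy. This is the standard deletion method, and it provably cannot reach $N = n^k/\mathrm{polylog}(n)$: for example for $F_5$ ($k=3$, five vertices, three edges), any $3$-graph on $N \approx n^3$ vertices dense enough to have independence number $O(n)$ must have edge density roughly $\log N/n^2$, and then the expected number of copies of $F_5$ is of order $N^5(\log N/n^{2})^3 \approx n^9\,\mathrm{polylog}(n) \gg N$, so deletion destroys the construction. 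This is exactly why the deletion method only gives $r(F_5,n) \geq cn^2/\log n$, as noted in the introduction. The paper avoids this entirely: $F$-freeness is achieved \emph{deterministically}, with no deletion of copies of $F$ at all. The hypergraph $H_{q,k}$ is a union of $q^{k-1}$ cliques of size $q$ (the neighborhoods $N(y)$ in the bipartite incidence graph $\Gamma_{q,k-1}$) such that any two cliques meet in at most $k-2$ vertices. The slowly growing hypothesis then forces every copy of $F$ to lie inside a single clique (each new edge introduces at most one new vertex, hence shares at least $k-1$ old vertices, too many to switch cliques), and replacing each clique by a random complete $k$-partite $k$-graph kills all copies because $F$ is non-degenerate. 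Your vague substitute (``codegree graph locally sparse and triangle-like-free'') does not supply this reduction.

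A second, smaller gap: your endgame does not match the parameters. After the container count, the paper does not try to show the original construction has no independent $n$-set; rather it counts independent sets of size $t = q^{1/(k-1)}(\log q)^2$ in the $q^2$-vertex hypergraph $H^*_{q,k}$ (at most $(cq/t)^t$ of them, via an iterated container argument whose supersaturation input comes from the expander mixing lemma for $\Gamma_{q,k-1}$ plus Azuma applied to the random colorings), then samples a random \emph{vertex} subset $W$ with probability $p = t/(c_1q)$ so that in expectation at most one independent $t$-set survives, and deletes one vertex from each. This yields roughly $pq^2 = q^{k/(k-1)}(\log q)^2$ vertices with independence number below $t$, i.e.\ $r(F,t) \gtrsim t^k/(\log t)^{2k-2}$. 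Without the deterministic $F$-freeness and this final vertex-sampling step, your parameter bookkeeping cannot produce the exponent $2k-2$.
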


\Cref{theorem:main} is tight up to a logarithmic factor for the following family of hypergraphs which includes $F_5$.
For $k\ge 3$, let $F_{2k-1}$ be the $k$-graph on $2k-1$ vertices $v_1,\dots,~v_{k-1},~w_1,~\dots,~w_k$ with $k$ edges $\{v_1,~\dots,~v_{k-1},~w_i\}$, $1\le i\le k-1$, and $\{w_1,~\dots,~w_k\}$. Further, let $T_k$ be the $k$-graph obtained from $F_{2k-1}$ by adding the edge $\{v_1,\dots,v_{k-1}, w_k\}$. See Figure~\ref{fig:F7T4} for an illustration of $F_7$ and $T_4$. Note that $T_2$ is a (graph) triangle.

\begin{figure}[h]
    \centering
    \includegraphics[scale=0.4]{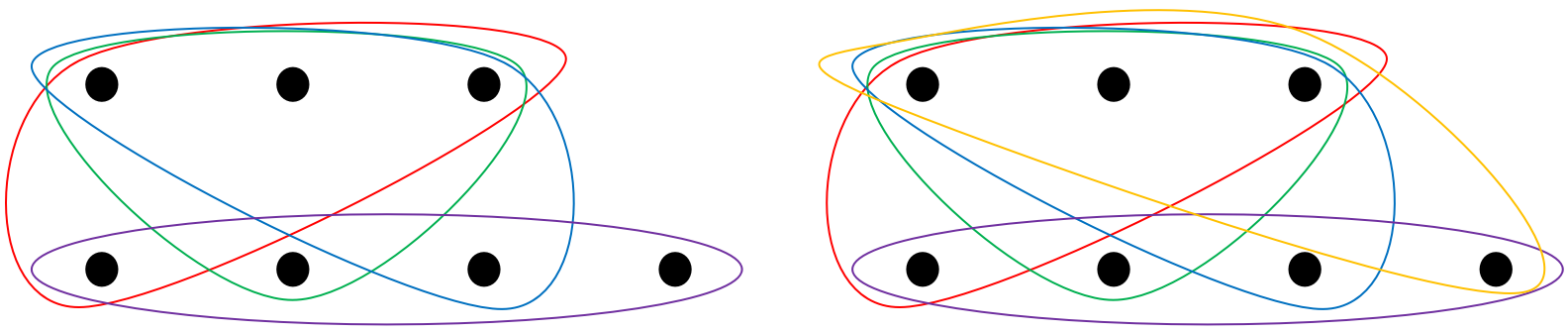}
    \caption{$F_7$ and $T_4$}
    \label{fig:F7T4}
\end{figure}

The order of magnitude of $r(T_k,n)$ for $k \geq 3$ is determined by the upper bound result of Kostochka, the second author and the fourth author~\cite{kostochka2014independent} together with the lower bound result of Bohman, the second author and Picollelli~\cite{bohman2016independent}. For $k = 2$, this theorem restates the known result~\cite{AKS1980,Kim1995,fiz2020triangle,bohman2021dynamic} that $r(K_3,n)$ has order of magnitude $n^2/\log n$.

\begin{thm}[Theorem 2, \cite{kostochka2014independent}; Theorem 1, \cite{bohman2016independent}]\label{theorem:KMV}
Let $k\ge 2$. Then there exist constants $c_1,~c_2>0$ such that for all integers $n\ge 2$,
$$
\frac{c_1n^k}{\log n}\le r(T_k,n)\le \frac{c_2n^k}{\log n}.
$$
\end{thm}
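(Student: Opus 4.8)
The plan is to treat the upper and lower bounds separately, citing the two external papers named in the statement and explaining how each applies to $T_k$, since both bounds have in fact already been established in the literature and we only need to assemble them.

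For the upper bound $r(T_k,n)\le c_2 n^k/\log n$, I would invoke Theorem 2 of Kostochka, Mubayi and Verstra\"ete~\cite{kostochka2014independent}. That result gives, for $k$-graphs avoiding a suitable ``local'' configuration, an independent set of size $\Omega\bigl(n(\log d/d)^{1/(k-1)}\bigr)$ in any $n$-vertex $k$-graph of average degree $d$; the key point is that $T_k$ is precisely of the type handled there. Concretely, I would first observe that if a $k$-graph $G$ on $N$ vertices is $T_k$-free, then one can bound how many edges pass through a fixed $(k-1)$-set together with control on ``cherries'', which is exactly the hypothesis used in~\cite{kostochka2014independent}. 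Feeding this into their independent-set bound and optimizing over the average degree $d$ (taking $d$ of order $N^{1/(k-1)}$ times a logarithmic correction, or simply applying their theorem as a black box) yields that any $T_k$-free $k$-graph on $N=c_2 n^k/\log n$ vertices has an independent set of size $n$, which is the desired statement. For $k=2$ this step is simply the Ajtai--Koml\'os--Szemer\'edi bound $r(K_3,n)=O(n^2/\log n)$~\cite{AKS1980}.

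For the lower bound $r(T_k,n)\ge c_1 n^k/\log n$, I would invoke Theorem 1 of Bohman, Mubayi and Picollelli~\cite{bohman2016independent}, which constructs, via an analysis of the random greedy $T_k$-free process (or an appropriate nibble argument), an $N$-vertex $T_k$-free $k$-graph whose independence number is $O\bigl((N\log N)^{1/k}\bigr)$ — equivalently, whose largest independent set has size $n$ only once $N=\Omega(n^k/\log n)$. I would state this construction, note that $T_k$-freeness is preserved, and read off the bound on the independence number; rescaling gives $r(T_k,n)\ge c_1 n^k/\log n$. For $k=2$ this is Kim's lower bound $r(K_3,n)=\Omega(n^2/\log n)$~\cite{Kim1995} (with the sharper constants of~\cite{fiz2020triangle,bohman2021dynamic} if one wants them).

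The main obstacle — really the only nontrivial point — is verifying that $T_k$ is exactly the forbidden configuration to which the two cited theorems apply, i.e., matching the definition of $T_k$ given here (the $k$-graph on $v_1,\dots,v_{k-1},w_1,\dots,w_k$ with edges $\{v_1,\dots,v_{k-1},w_i\}$ for $1\le i\le k$ together with $\{w_1,\dots,w_k\}$) with the hypergraph studied in~\cite{kostochka2014independent,bohman2016independent}. Once that identification is made, both directions are immediate consequences of the quoted results, so the proof amounts to carefully stating the correspondence and combining the inequalities with $c_1=c_1$ and $c_2=c_2$ as given there. I would also remark that for $k=2$ the statement is the classical order-of-magnitude determination of $r(K_3,n)$, so the theorem is a genuine common generalization.
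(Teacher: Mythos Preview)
Your proposal is correct and matches the paper's treatment: Theorem~\ref{theorem:KMV} is stated in the paper as a quotation of the two cited external results, with no additional proof given, and your plan is precisely to invoke those same theorems for the upper and lower bounds respectively. The only content beyond citation is the identification of $T_k$ with the forbidden configuration in~\cite{kostochka2014independent,bohman2016independent}, which you correctly single out as the one thing to check.
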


Thus we have $r(F_{2k-1},n)\le r(T_k,n)\le O({n^k}/{\log n})$. On the other hand, it is easy to check that $F_{2k-1}$ is a slowly growing non-degenerate $k$-graph. Hence \Cref{theorem:main} together with \Cref{theorem:KMV} implies the following theorem.

\begin{thm}\label{theorem:F}
Let $k\ge 3$. There exist constants $c_1,c_2>0$ such that for all integers $n\ge 2$,
$$
\frac{c_1n^k}{(\log n)^{2k-2}}\le r(F_{2k-1},n)\le \frac{c_2n^k}{\log n}.
$$
\end{thm}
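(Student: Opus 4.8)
The plan is to derive \Cref{theorem:F} directly from \Cref{theorem:main} (for the lower bound) and \Cref{theorem:KMV} (for the upper bound). The only substantive work is to verify that $F_{2k-1}$ is a slowly growing non-degenerate $k$-graph, so that \Cref{theorem:main} applies, and that $F_{2k-1}$ is a subhypergraph of $T_k$, so that the upper bound of \Cref{theorem:KMV} transfers.

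First I would check that $F_{2k-1}$ is slowly growing. Order its edges as $e_i=\{v_1,\dots,v_{k-1},w_i\}$ for $1\le i\le k-1$, followed by $e_k=\{w_1,\dots,w_k\}$. The first edge $e_1$ introduces $k$ vertices; each $e_i$ with $2\le i\le k-1$ contributes only the single new vertex $w_i$; and $e_k$ contributes only the new vertex $w_k$, since $w_1,\dots,w_{k-1}$ have already appeared. Hence $\bigl|e_i\setminus\bigcup_{j<i}e_j\bigr|\le 1$ for every $i\ge 2$, as required.

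Next I would check that $F_{2k-1}$ is non-degenerate. Suppose for contradiction that $V(F_{2k-1})$ admits a partition $V_1,\dots,V_k$ with every edge meeting each $V_\ell$ in exactly one vertex. Since $v_1,\dots,v_{k-1}$ all lie in the edge $e_1$, they occupy $k-1$ distinct parts; relabel so that $v_\ell\in V_\ell$ for $1\le\ell\le k-1$. Then for each $i\in\{1,\dots,k-1\}$ the edge $e_i=\{v_1,\dots,v_{k-1},w_i\}$ forces $w_i\in V_k$. But then the edge $e_k=\{w_1,\dots,w_k\}$ contains the $k-1\ge 2$ vertices $w_1,\dots,w_{k-1}$ of $V_k$, contradicting that $e_k$ meets $V_k$ in exactly one vertex. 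Thus $F_{2k-1}$ is non-degenerate, and \Cref{theorem:main} yields a constant $c_1=c_1(k)>0$ with $r(F_{2k-1},n)\ge c_1 n^k/(\log n)^{2k-2}$ for all $n\ge 2$.

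For the upper bound, note that $T_k$ is obtained from $F_{2k-1}$ by adding a single edge on the same vertex set, so $F_{2k-1}$ is a subhypergraph of $T_k$; consequently every copy of $T_k$ contains a copy of $F_{2k-1}$, and hence every $F_{2k-1}$-free $k$-graph is $T_k$-free. Taking $N=r(T_k,n)$, every $F_{2k-1}$-free $k$-graph on $N$ vertices is $T_k$-free and therefore has an independent set of size $n$, which gives $r(F_{2k-1},n)\le r(T_k,n)\le c_2 n^k/\log n$ by \Cref{theorem:KMV}. There is no real obstacle here: the statement is a corollary of the two cited theorems. The only places calling for a moment's care are the non-degeneracy check above and the observation that the constant in \Cref{theorem:main} is uniform over all slowly growing non-degenerate $k$-graphs $F$ once $k$ is fixed, which is precisely how that theorem is phrased.
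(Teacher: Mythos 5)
Your proposal is correct and follows exactly the paper's route: the lower bound comes from \Cref{theorem:main} after verifying that $F_{2k-1}$ is slowly growing and non-degenerate (details the paper leaves as ``easy to check,'' which you supply correctly), and the upper bound comes from $F_{2k-1}\subseteq T_k$ together with \Cref{theorem:KMV}. No issues.
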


\Cref{theorem:F} determines $r(F_{2k-1},n)$ up to a logarithmic factor. In particular, this determines $r(F_5,n)$ up to a polylogarithmic factor, and  $F_5$ is the only 3-uniform Berge triangle for which the polynomial power of the off-diagonal Ramsey number was not previously known.

It would be interesting to determine its order of magnitude. We believe the current upper bounds are closer to the truth:

\begin{conj}
There exists a constant $c > 0$ such that for $n \geq 2$, 
\[ r(F_5,n) \geq \frac{cn^3}{\log n}.\]
    \end{conj}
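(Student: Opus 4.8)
\emph{Plan.} I would start from the clean observation that for any $3$-graph $H$, the shadow of a copy of $F_5=\{123,124,345\}$ restricted to the vertex set $\{1,2,3,4\}$ is a complete graph $K_4$; hence if the shadow $\partial H$ (the graph of pairs lying in some edge of $H$) is $K_4$-free, then $H$ is $F_5$-free. In particular, writing $\T(G)$ for the $3$-graph on $V(G)$ whose edges are the triangles of a $K_4$-free graph $G$, the hypergraph $\T(G)$ is $F_5$-free, and a set is independent in $\T(G)$ exactly when it induces a triangle-free subgraph of $G$; so $\alpha(\T(G))\ge\alpha(G)$, with equality up to a constant whenever $G$ is pseudorandom enough that every set of $\omega(\alpha(G))$ vertices spans a triangle (a triangle-counting lemma for an eigenvalue-pseudorandom host). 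Feeding the $K_4$-free Ramsey graph underlying the $k=3$ case of \Cref{theorem:main} into this reduction would reprove $r(F_5,n)=\Omega(n^3/(\log n)^4)$ in a conceptually cleaner way. However, this route is intrinsically bounded by the off-diagonal Ramsey number $R(4,t)$: even if $R(4,t)$ attains its current best upper bound $O(t^3/(\log t)^2)$, one only obtains $r(F_5,n)=\Omega(n^3/(\log n)^2)$ --- still a logarithmic factor short of the conjecture. So the conjecture genuinely requires going beyond ``triangle hypergraph of a $K_4$-free graph''.

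The reason, and the heart of the plan, is structural: an $F_5$-free $3$-graph is, up to lower-order configurations, one in which every link $L_{ab}=\{c:\{a,b,c\}\in E(H)\}$ is an independent set of $\partial H$. Thus $\alpha(H)\ge\alpha(\partial H)$ and $3\,e(H)=\sum_{ab}|L_{ab}|\le e(\partial H)\cdot\alpha(\partial H)$, so to drive $\alpha(H)$ down to order $(N\log N)^{1/3}$ --- which by the Spencer-type bound for $3$-graphs forces $e(H)$ of order $N^{7/3}$ --- one needs $e(\partial H)$ of order $N^2$, i.e. the shadow must be \emph{nearly complete}, a regime far from $\T(G)$ (whose shadow has only $\sim N^{5/3}$ pairs). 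The construction I would attempt: take $\partial H=K_N$ minus a sparse graph whose components are the candidate links, so that $\partial H$ is near-complete with independence number $\sim(N\log N)^{1/3}$ realised by these components; then assign to each shadow pair $\{a,b\}$ one of these components as its link $L_{ab}$, in such a way that no set of $\omega((N\log N)^{1/3})$ vertices is link-closed. Following the method of \Cref{theorem:main} and of Mattheus--Verstra\"ete, the link-assignment would be read off from a random sparsification of an auxiliary hypergraph built on an eigenvalue-pseudorandom (generalized-quadrangle-type) host, $F_5$-copies being killed by deleting one edge from each, and $\alpha(H)$ would be controlled by the hypergraph container method applied to that auxiliary hypergraph together with martingale concentration for the sparsification.

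The main obstacle is that each of the three ingredients --- the error term in the pseudorandomness of the host, the Azuma/martingale concentration of $F_5$-counts and container occupancies under sparsification, and the hypergraph container lemma itself --- typically costs a logarithmic factor, and the $(\log n)^{2k-2}$ loss in \Cref{theorem:main} is precisely this accumulation; reaching $n^3/\log n$ demands that all of them be made lossless up to constants at once, and the computation above shows the target sits exactly on the boundary (the upper bound $3\,e(H)\le e(\partial H)\,\alpha(\partial H)$ must be essentially tight, with codegrees of order $(N\log N)^{1/3}$ meeting $\alpha(\partial H)$). This is the hypergraph analogue of the $r(K_3,n)$ situation, where pinning down even the power of the logarithm (Kim), let alone the constant, required a delicate dynamic-concentration analysis of the triangle-free process; I would expect that either a comparably sharp analysis of an $F_5$-free (or $T_3$-free, cf. Bohman--Mubayi--Picollelli and \Cref{theorem:KMV}) random greedy process run on a structured host, or an explicit algebraic $F_5$-free $3$-graph with asymptotically optimal independence number (for which there is currently no candidate), will be needed.
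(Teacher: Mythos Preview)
The statement you were asked to prove is a \emph{Conjecture} in the paper, not a theorem; the paper contains no proof of it and explicitly leaves it open (``We believe the current upper bounds are closer to the truth''). Your submission is likewise not a proof: it is a research outline that sketches possible approaches, identifies structural constraints, and explains why each route currently loses logarithmic factors. Since neither the paper nor you provide a proof, there is nothing to compare at the level of argument.

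That said, your discussion is mathematically sensible. The shadow-$K_4$ observation is correct and gives a clean conceptual route to the $n^3/(\log n)^4$ bound already established by \Cref{theorem:main}; your analysis that any ``triangle hypergraph of a $K_4$-free graph'' approach is capped at $n^3/(\log n)^2$ by the upper bound on $r(K_4,t)$ is also correct, and your link-independence observation (that in an $F_5$-free $3$-graph each pair-link is, modulo degenerate cases, independent in the shadow) is a genuine structural constraint. But none of this closes the gap: you yourself conclude that either a Kim/Bohman--Keevash-type dynamic-concentration analysis of an $F_5$-free process or a new explicit construction would be required, and you do not supply either. So what you have written is a reasonable assessment of why the conjecture is hard, not a proof of it.
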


\section{The Construction}
The proof of Theorem~\ref{theorem:main} uses the so-called random block construction, which first requires a pseudorandom bipartite graph. We build our construction using the following bipartite graph.

\begin{defn}
For every prime power $q$ and integer $m\ge 3$, let $\Gamma_{q,m}$ be the bipartite graph with two parts $X=\F^2_q$ and $Y=\F^m_q$, where two vertices $x=(x_0,x_1)\in X$ and $y=(y_0,\dots,y_{m-1})\in Y$ form an edge if and only if
$$
x_1=\sum_{i=0}^{m-1}y_ix_0^{i}
$$
\end{defn}

For any vertices $x,y$ of a graph $G$, we use $d(x)$ to denote the degree of $x$, that is, the number of neighbors of $x$ in $G$, and we use $d(x,y)$ to denote the number of common neighbors of $x$ and $y$. 
It is easy to check the following proposition.

\begin{prop}\label{proposition:Gamma}
For every prime power $q$ and integer $m\ge 2$, $\Gamma_{q,m}$ has the following properties:
\begin{itemize}[align=left]
    \item[{\rm (i)}] $\forall\, x\in X$, $d(x)=q^{m-1}$.
    \item[{\rm (ii)}] $\forall\, y\in Y$, $d(y)=q$.
    \item[{\rm (iii)}] $\forall\, y_1,y_2\in Y$, $d(y_1,y_2)\le m-1$.
\end{itemize}
\end{prop}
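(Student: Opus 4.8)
The plan is to prove all three items directly from the defining equation $x_1=\sum_{i=0}^{m-1}y_ix_0^i$, using only two elementary facts over the field $\F_q$: a single nonhomogeneous linear equation in $m$ unknowns whose coefficient vector is nonzero has an affine solution space of dimension $m-1$ (hence $q^{m-1}$ solutions), and a nonzero polynomial of degree at most $d$ has at most $d$ roots.

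For (i) I would fix $x=(x_0,x_1)\in X$ and read the defining relation as a linear equation in the unknowns $y_0,\dots,y_{m-1}$ with coefficient vector $(1,x_0,\dots,x_0^{m-1})$. Since the coefficient of $y_0$ equals $1\neq 0$, this vector is nonzero, so the set of neighbors of $x$ is an affine hyperplane of $\F_q^m$ and thus has exactly $q^{m-1}$ points. For (ii) I would instead fix $y=(y_0,\dots,y_{m-1})\in Y$, set $p_y(t)=\sum_{i=0}^{m-1}y_it^i\in\F_q[t]$, and observe that $x=(x_0,x_1)$ is a neighbor of $y$ if and only if $x_1=p_y(x_0)$; letting $x_0$ range over the $q$ elements of $\F_q$ determines $x_1$ uniquely in each case, so $d(y)=q$. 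For (iii) I would fix distinct $y_1,y_2\in Y$ and note that any common neighbor $(x_0,x_1)$ forces $p_{y_1}(x_0)=x_1=p_{y_2}(x_0)$, so $x_0$ is a root of $p_{y_1}-p_{y_2}$; this polynomial is nonzero of degree at most $m-1$, hence has at most $m-1$ roots in $\F_q$, and each such $x_0$ determines $x_1$, giving $d(y_1,y_2)\leq m-1$.

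There is no genuine obstacle here; the only points that need to be recorded are that the coefficient vector $(1,x_0,\dots,x_0^{m-1})$ is nonzero (immediate from its constant term $1$) and that $p_{y_1}-p_{y_2}\neq 0$ whenever $y_1\neq y_2$ (immediate since $y\mapsto p_y$ is a linear bijection from $\F_q^m$ onto the polynomials of degree less than $m$). So the proof is a short case analysis, fixing a vertex on one side for (i) and on the other side for (ii) and (iii).
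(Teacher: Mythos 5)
Your proof is correct and is exactly the routine verification the paper has in mind (the paper omits the proof, stating only that the proposition ``is easy to check''). The only cosmetic point is that item (iii) should be read as concerning \emph{distinct} $y_1,y_2$, as you implicitly do; for $y_1=y_2$ the bound is false but also never used.
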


For all $k\ge 3$, let $H_{q,k}$ be a $k$-uniform hypergraph on $X=X(\Gamma_{q,k-1})$ whose edges are all $k$-sets $\{x_1,\dots, x_k\}\subseteq X$ such that there exists an element $y\in Y=Y(\Gamma_{q,k-1})$ such that  $\{x_1,\dots, x_k\}\subseteq N(y)$. By \Cref{proposition:Gamma}, $H_{q,k}$ is the union of $q^{k-1}$ 
$k$-uniform cliques on $q$ vertices such that each vertex is contained in $q^{k-2}$ cliques and the vertex sets of every two cliques intersect in at most $k-2$ vertices. Let $H^*_{q,k}$ be the $k$-uniform hypergraph obtained by replacing each maximal clique of $H_{q,k}$ with a random complete $k$-partite $k$-graph on the same vertex set. More formally, for each $y\in Y$, we color the vertices in $N(y)$ with $k$ colors $\{1,\dots,k\}$ uniformly at random, and for each $1\le i\le k$ we let $X_{y,i}\subseteq N(y)$ be the set of vertices with color $i$, and then we replace the clique on $N(y)$ with a complete $k$-partite $k$-graph on $N(y)$ with $k$-partition $X_{y,1}\sqcup\dots\sqcup X_{y,k}$. It is easy to check the following proposition.

\begin{prop}\label{proposition:F-free}
If $F$ is a non-degenerate slowly growing $k$-graph, then $H^*_{q,k}$ is $F$-free.
\end{prop}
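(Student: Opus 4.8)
The plan is to argue by contradiction: suppose $H^*_{q,k}$ contains a copy of $F$, say with edges $f_1,\dots,f_t$ ordered so that $|f_i\setminus\bigcup_{j<i}f_j|\le 1$ for $i\ge 2$, witnessing that $F$ is slowly growing. Each edge $f$ of $H^*_{q,k}$ lies inside $N(y)$ for some $y\in Y$, and moreover $f$ is a \emph{rainbow} $k$-set with respect to the random coloring of $N(y)$. The first key observation is that the vertices $y\in Y$ used to realize the edges of the $F$-copy can essentially be taken to be \emph{distinct}: since $F$ is slowly growing, each $f_i$ with $i\ge2$ contains at least $k-1$ vertices already appearing in $f_1\cup\dots\cup f_{i-1}$, in particular at least $k-1$ vertices lying in a common neighborhood $N(y_j)$ for some earlier $j$; by Proposition~\ref{proposition:Gamma}(iii) two distinct vertices of $Y$ have at most $k-2$ common neighbors in $X$, so if $f_i\subseteq N(y)$ with $|f_i\cap N(y_j)|\ge k-1$ then $y=y_j$. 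Thus the whole $F$-copy lives inside a single neighborhood $N(y)$ for one fixed $y\in Y$ — except possibly for a set of isolated-type vertices, which for a slowly growing $F$ cannot occur in an honest copy with $t\ge2$ edges unless $F$ itself sits inside one clique. (One should phrase this cleanly: the ``growth'' structure forces all edges to be traced back to the first clique.)

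Granting that all edges of the $F$-copy lie in a single $N(y)$, the hypergraph induced on $N(y)$ in $H^*_{q,k}$ is a complete $k$-partite $k$-graph with parts $X_{y,1},\dots,X_{y,k}$. But then the copy of $F$ inside it exhibits a proper $k$-coloring of $V(F)$ in which every edge is rainbow — precisely a partition of $V(F)$ into $k$ independent-transversal classes — which says exactly that $F$ is $k$-partite, i.e.\ degenerate. This contradicts the hypothesis that $F$ is non-degenerate, completing the proof.

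The step I expect to be the main obstacle is making the ``single neighborhood'' reduction fully rigorous, because a priori the copy of $F$ in $H^*_{q,k}$ need not come with any compatible edge-ordering: the slowly growing order is a property of the abstract hypergraph $F$, and one must check that \emph{this same order}, transported to the embedded copy, forces each successive edge into the previously used clique. The clean way to do this is induction on $i$: assume $f_1,\dots,f_{i-1}$ all lie in $N(y)$; since $|f_i\setminus\bigcup_{j<i}f_j|\le1$, at least $k-1$ vertices of $f_i$ lie in $\bigcup_{j<i}f_j\subseteq N(y)$; if $f_i\subseteq N(y')$ then $N(y)\cap N(y')$ has size $\ge k-1>k-2$, forcing $y'=y$ by Proposition~\ref{proposition:Gamma}(iii). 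The base case is that $f_1$ lies in some $N(y)$ by definition of $H^*_{q,k}$; and one has to handle the degenerate possibility $t\le1$ (a single edge, or $F$ with one edge) separately, where non-degeneracy of $F$ already fails or the statement is vacuous. Once this reduction is in hand, the rest is the short rainbow-coloring argument above, and the final contradiction with non-degeneracy is immediate.
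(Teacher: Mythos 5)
Your proof is correct and follows essentially the same route as the paper: induct along the slowly growing edge ordering, use Proposition~\ref{proposition:Gamma}(iii) to force every edge into the single clique $N(y)$ containing the first edge, and then conclude that $F$ would have to be $k$-partite. The only blemish is the phrase saying the vertices $y\in Y$ ``can essentially be taken to be \emph{distinct}'' --- you clearly mean the opposite (they all coincide), and your subsequent induction establishes exactly that.
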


\begin{proof}
    Consider an ordering $e_1,\dots,e_t$ of the edges of $F$ such that 
    $$
\forall \, i \in \{2,\dots,t\},~\Bigl|e_i\cap \bigcup_{j = 1}^{i - 1} \, e_j\Bigr|\ge k-1.
$$
    We claim that every copy of $F$ in $H_{q,k}$ must be fully contained in one of the $q^{k-1}$ $k$-uniform cliques of size $q$. Indeed, suppose that we want to build a copy of $F$ in $H_{q,k}$ by consecutively picking the edges in the order given above. Then the fact that every two cliques of $H_{q,k}$ intersect in at most $k-2$ vertices shows that we must pick every  edge in the clique containing the previous edges. Since $H_{q,k}^*$ is obtained from $H_{q,k}$ by replacing every clique by a complete $k$-partite $k$-graph and $F$ itself is not $k$-partite, this proves the statement.
\end{proof}

We will fix an instance of $H^*_{q,k}$ with good {\em Balanced Supersaturation}, which means that each induced subgraph of $H^*_{q,k}$ on $q^{1+o(1)}$ vertices contains many edges that are evenly distributed. Using Balanced Supersaturation together with the Hypergraph Container Lemma ~\cite{balogh2015independent, saxton2015hypergraph}, we can find upper bounds on the number of independent sets in $H^*_{q,k}$ of size $t=(\log q)^2q^{\frac{1}{k-1}}$.

We then take a random subset $W$ of $V(H^*_{q,k})$ where each vertex is sampled independently with probability $p=\Theta(\frac{t}{q})$ as in~\cite{MV2024}. Finally, our construction is obtained by arbitrarily deleting a vertex from each independent set of size $t$ in $H^*_{q,k}[W]$.

\section{Pseudorandomness of $\Gamma_{q,k-1}$}
In this section we show the pseudorandomness of $\Gamma_{q,k-1}$, which will be useful later in showing balanced supersaturation of $H^*_{q,k}$.

Given an $n$-vertex graph $G$, let $A_G$ be the adjacency matrix of $G$, which is the $n \times n$ symmetric matrix where
    \begin{equation*}
        A_G(i,j) := \begin{cases}
               1, & \mbox{if } \{i,j\}\in E(G), \\
               0, & {\rm otherwise.}
            \end{cases}
    \end{equation*} 

Let $\lambda_{1}(G) \geq \dots \geq \lambda_{n}(G)$ denote the eigenvalues of $A_G$. If $G$ is a bipartite graph with bipartition $V_1\sqcup V_2$, we say $G$ is {\em $(d_1,d_2)$-regular} if $d(v)=d_1$ for all $v\in V_1$ and $d(v)=d_2$ for all $v\in V_2$. 

The seminal expander mixing lemma is an important tool that relates edge distribution to the second eigenvalue of a graph. Here we make use of the bipartite version. 

\begin{lem}[Theorem 5.1,~\cite{Hae95}]\label{lemma:expander mixing}
Suppose that $G$ is a $(d_1, d_2)$-regular bipartite graph with bipartition $V_1\sqcup V_2$. Then for every $S\subset V_1$ and $T\subset V_2$, the number of edges between $S$ and $T$, denoted by $e(S, T)$, satisfies 
$$
        \left| e(S,T) - \frac{d_2}{|V_1|}|S||T| \right|
        \le \lambda_{2}(G)\sqrt{|S||T|}.
$$
\end{lem}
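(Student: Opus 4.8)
The plan is to run the standard linear-algebraic proof of the expander mixing lemma, phrased through the biadjacency matrix. Write $n_i := |V_i|$, order the vertices with $V_1$ before $V_2$, and let $B$ be the $n_1 \times n_2$ biadjacency matrix of $G$, so that $A_G = \left(\begin{smallmatrix} 0 & B \\ B^T & 0\end{smallmatrix}\right)$. The $(d_1,d_2)$-regularity is exactly the statement that $B\mathbf{1}_{n_2} = d_1\mathbf{1}_{n_1}$ and $B^T\mathbf{1}_{n_1} = d_2\mathbf{1}_{n_2}$, and double counting edges gives $d_1 n_1 = d_2 n_2$, hence $d_1/n_2 = d_2/n_1$; this identity is what lets the ``expected'' term be written interchangeably as $\frac{d_2}{|V_1|}|S||T|$ or $\frac{d_1}{|V_2|}|S||T|$.

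First I would record the spectral input. The nonzero eigenvalues of $A_G$ are precisely $\pm\sigma$ for each nonzero singular value $\sigma$ of $B$, with all remaining eigenvalues equal to $0$; consequently, sorted in decreasing order, $\lambda_1(G) = \sigma_1$ and $\lambda_2(G) = \sigma_2$, the top two singular values of $B$. From the regularity relations, $u_1 := \mathbf{1}_{n_1}/\sqrt{n_1}$ and $v_1 := \mathbf{1}_{n_2}/\sqrt{n_2}$ form a singular pair of $B$ with singular value $\sqrt{d_1d_2}$, and a row-sum bound on $B^TB$ (each row of $B^TB$ sums to $d_1d_2$) shows $\|B\|_{\mathrm{op}} \le \sqrt{d_1d_2}$, so in fact $\sigma_1 = \sqrt{d_1d_2}$ with $u_1,v_1$ as leading singular vectors. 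The one consequence I need, read off from the singular value decomposition $B = \sum_i \sigma_i u_i v_i^T$, is that $\|By\| \le \sigma_2\|y\| = \lambda_2(G)\|y\|$ for every $y$ orthogonal to $\mathbf{1}_{n_2}$.

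Next I would expand $e(S,T)$ as a bilinear form. With indicator vectors $\chi_S \in \{0,1\}^{n_1}$ and $\chi_T \in \{0,1\}^{n_2}$ we have $e(S,T) = \chi_S^T B \chi_T$. Decompose $\chi_S = \frac{|S|}{n_1}\mathbf{1}_{n_1} + \chi_S'$ and $\chi_T = \frac{|T|}{n_2}\mathbf{1}_{n_2} + \chi_T'$ with $\chi_S' \perp \mathbf{1}_{n_1}$ and $\chi_T' \perp \mathbf{1}_{n_2}$. Expanding, the two cross terms vanish because $B\mathbf{1}_{n_2} = d_1\mathbf{1}_{n_1}$ and $B^T\mathbf{1}_{n_1} = d_2\mathbf{1}_{n_2}$ are multiples of all-ones vectors, hence orthogonal to $\chi_S'$ and $\chi_T'$; the $\mathbf{1}$--$\mathbf{1}$ term equals $\frac{|S||T|}{n_1 n_2}\,\mathbf{1}_{n_1}^T B\mathbf{1}_{n_2} = \frac{|S||T|}{n_1 n_2}\, d_1 n_1 = \frac{d_2}{n_1}|S||T|$, the main term; and the leftover is $(\chi_S')^T B \chi_T'$. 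Cauchy--Schwarz together with the spectral bound above gives $|(\chi_S')^T B\chi_T'| \le \|\chi_S'\|\,\|B\chi_T'\| \le \lambda_2(G)\,\|\chi_S'\|\,\|\chi_T'\|$, and since $\|\chi_S'\|^2 = |S| - |S|^2/n_1 \le |S|$ and $\|\chi_T'\|^2 \le |T|$, we obtain $\bigl|e(S,T) - \frac{d_2}{|V_1|}|S||T|\bigr| \le \lambda_2(G)\sqrt{|S||T|}$, as required.

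There is no genuine obstacle here — this is the textbook argument — so the only points demanding care are bookkeeping: verifying that the eigenvalue indexing of $A_G$ really yields $\lambda_2(G) = \sigma_2$ (using $\lambda_1(G) = \sqrt{d_1d_2} \ge 0$ and the $\pm$-symmetry of the nonzero spectrum), and checking that the density normalizations $\frac{d_2}{|V_1|}$ and $\frac{d_1}{|V_2|}$ coincide. An alternative route, if one wishes to avoid singular values, is to apply the ordinary (non-bipartite) expander mixing lemma to $A_G$ with $S, T \subseteq V_1 \sqcup V_2$ and then simplify using that there are no edges inside $V_1$ or inside $V_2$, but the direct biadjacency-matrix computation above is the cleanest.
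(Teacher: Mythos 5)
Your proof is correct: it is the standard biadjacency/singular-value argument, and every step checks out (the identity $d_1|V_1|=d_2|V_2|$, the identification $\lambda_2(G)=\sigma_2(B)$ via the $\pm$-symmetry of the bipartite spectrum, the vanishing of the cross terms, and the Cauchy--Schwarz bound on the residual). Note that the paper does not prove this lemma at all --- it imports it directly as Theorem 5.1 of Haemers --- so there is no in-paper argument to compare against; your write-up simply supplies the standard proof that the citation points to.
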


By \Cref{proposition:Gamma}, we know $\Gamma_{q,k-1}$ is $(q^{k-2},q)$-regular. For convenience, from now on we let $n=|V(\Gamma_{q,k-1})|=q^2+q^{k-1}$, $A=A_{\Gamma_{q,k-1}}$, $\lambda_i=\lambda_i(\Gamma_{q,k-1})$ for all $1\le i\le n$, and let $d_1=q^{k-2}$, $d_2=q$. 

\begin{lem}\label{lemma:incidence_eigenvalue}
$\lambda_2=q^{\frac{k}{2}-1}$.
\end{lem}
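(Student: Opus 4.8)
The plan is to compute the eigenvalues of $A = A_{\Gamma_{q,k-1}}$ directly by exhibiting the graph as a union of blocks indexed by the ``slope'' coordinate. Write $A$ in block form according to the bipartition $X = \F_q^2$ and $Y = \F_q^{k-1}$; then $A A^T$ is the $X\times X$ matrix whose $(x, x')$ entry counts common neighbors $d(x,x')$, and $A^T A$ is the analogous $Y \times Y$ matrix. Since $\Gamma_{q,k-1}$ is bipartite and $(d_1, d_2)$-regular with $d_1 = q^{k-2}$, $d_2 = q$, its nonzero eigenvalues come in pairs $\pm\sqrt{\mu}$ where $\mu$ ranges over the nonzero eigenvalues of $A A^T$ (equivalently of $A^T A$), and $\lambda_1 = \sqrt{d_1 d_2} = q^{(k-1)/2}$. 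So it suffices to understand the second-largest eigenvalue $\mu_2$ of $A A^T$ and show $\mu_2 = q^{k-2}$, giving $\lambda_2 = q^{k/2 - 1}$.

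First I would compute $d(x, x')$ explicitly. Two vertices $x = (x_0, x_1)$ and $x' = (x_0', x_1')$ of $X$ have a common neighbor $y = (y_0, \dots, y_{k-2}) \in Y$ precisely when $\sum_i y_i x_0^i = x_1$ and $\sum_i y_i (x_0')^i = x_1'$. If $x_0 \neq x_0'$ these are two linearly independent linear constraints on $y \in \F_q^{k-1}$, so the number of solutions is $q^{k-1-2} = q^{k-3}$; if $x_0 = x_0'$ and $x_1 \neq x_1'$ there are no solutions; and if $x = x'$ there are $q^{k-2}$ solutions (this is just $d(x)$). Hence
\[
A A^T = q^{k-3}\bigl(J - I_{\text{block}}\bigr) + q^{k-2} I,
\]
where $I_{\text{block}}$ is the block-diagonal $0/1$ matrix that is $1$ on pairs with the same first coordinate $x_0$ (an $q \times q$ all-ones block for each fixed $x_0$), $J$ is the all-ones $q^2 \times q^2$ matrix, and $I$ is the identity. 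More cleanly, grouping the $q^2$ vertices of $X$ into $q$ classes by the value of $x_0$, we get $A A^T = q^{k-3} J - q^{k-3} (I_q \otimes J_q) + q^{k-2} I_{q^2}$, where $J_q$ is the $q\times q$ all-ones matrix and $I_q \otimes J_q$ is block-diagonal with blocks $J_q$.

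The eigenvalues of this matrix are now read off from the spectra of $J_{q^2}$, $I_q \otimes J_q$, and $I_{q^2}$, which are simultaneously diagonalizable: on the all-ones vector, $J_{q^2}$ acts as $q^2$ and $I_q\otimes J_q$ acts as $q$, giving eigenvalue $q^{k-3}\cdot q^2 - q^{k-3}\cdot q + q^{k-2} = q^{k-1} + q^{k-2} - q^{k-2} = q^{k-1}$ --- wait, recompute: $q^{k-1} - q^{k-2} + q^{k-2} = q^{k-1}$, consistent with $\lambda_1^2 = d_1 d_2 = q^{k-2}\cdot q = q^{k-1}$. On vectors constant on each $x_0$-class but summing to zero across classes, $J_{q^2}$ acts as $0$, $I_q \otimes J_q$ acts as $q$, giving eigenvalue $-q^{k-3}\cdot q + q^{k-2} = 0$. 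On vectors summing to zero within every $x_0$-class, both $J_{q^2}$ and $I_q\otimes J_q$ act as $0$, giving eigenvalue $q^{k-2}$ with multiplicity $q^2 - q$. So the eigenvalues of $A A^T$ are $q^{k-1}$ (once), $q^{k-2}$ (multiplicity $q^2 - q$), and $0$ (multiplicity $q - 1$); thus $\mu_2 = q^{k-2}$ and $\lambda_2 = q^{k/2 - 1}$, as claimed.

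I do not expect any serious obstacle here: the main points are (a) correctly counting common neighbors via the rank of a $2 \times (k-1)$ Vandermonde-type coefficient matrix (the two rows $(1, x_0, \dots, x_0^{k-2})$ and $(1, x_0', \dots, (x_0')^{k-2})$ are independent iff $x_0 \neq x_0'$, using $k - 1 \geq 2$), and (b) the bookkeeping that translates the spectrum of $A A^T$ into the spectrum of $A$ via the standard bipartite singular-value correspondence, together with checking that the value $q^{k-2}$ genuinely occurs (i.e.\ $q^2 - q > 0$, true since $q \geq 2$) so that it is the \emph{second} eigenvalue and not merely an upper bound. One should also confirm the zero eigenvalue count is consistent with $\operatorname{rank}(A) = q^2 - (q-1)$, which it is. The only mild subtlety is the degenerate case $k = 3$, where $q^{k-3} = q^0 = 1$ and the common-neighbor count for distinct first coordinates is exactly $1$; the formula still goes through verbatim.
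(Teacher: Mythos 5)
Your proof is correct, and it takes a genuinely different route from the paper's. The paper works with the full adjacency matrix $A$ and establishes the identity $A^3 = (q-1)q^{k-3}M + q^{k-2}A$ by counting walks of length three between $X$ and $Y$; it then observes that every eigenvector for $1<i<n$ can be chosen orthogonal to the characteristic vectors of $X$ and $Y$, hence is killed by $M$, which forces $\lambda_i^3 = q^{k-2}\lambda_i$ and so $\lambda_i \in \{0, \pm q^{k/2-1}\}$; finally it needs a separate rank argument to guarantee that the value $q^{k/2-1}$ is actually attained. You instead compute the Gram matrix of common neighbours on the $X$-side, identify it as $q^{k-3}\bigl(J_{q^2} - I_q\otimes J_q\bigr) + q^{k-2}I_{q^2}$ via the Vandermonde rank computation, and diagonalize it completely, obtaining the full spectrum with multiplicities ($q^{k-1}$ once, $q^{k-2}$ with multiplicity $q^2-q$, $0$ with multiplicity $q-1$) before passing to $A$ by the bipartite singular-value correspondence. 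Your route yields strictly more information (the entire spectrum rather than just the bound on $\lambda_2$) and replaces the paper's ``rank of $A$ is larger than $2$'' step with the explicit multiplicity count $q^2 - q > 0$. The only blemish is notational: you write $AA^T$ where you mean the product $NN^T$ of the $|X|\times|Y|$ biadjacency matrix $N$ with its transpose (the paper's $A$ is the full symmetric $(q^2+q^{k-1})\times(q^2+q^{k-1})$ matrix, whose square restricted to the $X\times X$ block is what you compute), and likewise your rank assertion $q^2-q+1$ refers to $N$ rather than to $A$; this does not affect the validity of the argument.
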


\begin{proof}
Define the matrix
$$
M=\left[ 
        \begin{array}{c|c} 
          0 & J \\ 
          \hline 
          J^t & 0 
        \end{array} 
        \right],
$$
where $J$ is the $|X|\times |Y|$ all-one matrix.
We will show that
\begin{equation}\label{eq:incidence_eigenvalue}
        A^3 
        = (q-1)q^{k-3}M+q^{k-2} A.
\end{equation}
 By definition, for any $x\in X$ and $y\in Y$, $A^3(x,y)$ is the number of walks of length three of the form $xy'x'y$ in $\Gamma_{q,k-1}$. There are two cases.

\medskip

\noindent \textbf{Case 1: $xy\in E(\Gamma_{q,k-1})$.} 
When $x'=x$, the number of choices for $y'$ is $q^{k-2}$. When $x'\not =x$, the number of choices for $x'$ is $q-1$ and for each such $x'$, the number of choices for $y'$ is $q^{k-3}$. Thus in this case the number of walks $xy'x'y$ is $q^{k-2}+(q-1)q^{k-3}$.

\medskip

\noindent \textbf{Case 2: $xy\not\in E(\Gamma_{q,k-1})$.} 
Suppose $x=(x_0,x_1)$ and $x'=(x'_0,x'_1)$. If $x_0=x'_0$, then $x_1\not=x'_1$, and hence $x$ and $x'$ have no common neighbor. When $x_0\not =x'_0$, the number of choices for $x'$ is $q-1$ and for each such $x'$, the number of choices for $y'$ is $q^{k-3}$. Thus in this case the number of walks $xy'x'y$ is $(q-1)q^{k-3}$.

Combining the two cases above we  obtain \Cref{eq:incidence_eigenvalue}. Next, let $u_X$ be the characteristic vector of $X$, that is, $u_X(v)=1$ for each $v\in V(\Gamma_{q,k-1})$ and $u_X(v)=0$ otherwise. Similarly, let $u_Y$ be the characteristic vector of $Y$. Let $a_1=\sqrt{d_1}u_X+\sqrt{d_2}u_Y$ and let $a_n=\sqrt{d_1}u_X-\sqrt{d_2}u_Y$. It is easy to check that $\lambda_1=-\lambda_n=\sqrt{d_1d_2}$ and that $a_1$, $a_n$ are eigenvectors corresponding to $\lambda_1$ and $\lambda_n$. Since $M$ is symmetric, the spectral theorem implies that $M$ has an orthornormal basis of eigenvectors. Hence, for  each $1<i<n$, there exists an eigenvector $a_i$ corresponding to $\lambda_i$ such that $a_i$ is orthogonal to both $a_1$ and $a_n$. Thus $a_i$ is orthogonal to $u_X$ and $u_Y$, which implies that $M\cdot a_i=0$.  Multiplying both sides of \Cref{eq:incidence_eigenvalue} by $a_i$, we obtain $\lambda_i^3=q^{k-2}\lambda_i$. Because the rank of $A$ is larger than 2, there exists at least one $\lambda_i\not=0$, and hence $\lambda_i=\pm q^{\frac{k}{2}-1}$. Note that since $\Gamma_{q,k-1}$ is bipartite, we have $\lambda_i=\lambda_{n-i+1}$. Therefore, $\lambda_2=q^{\frac{k}{2}-1}$.
\end{proof}

Let $S$ be a subset of $X$ with size $|S|=rq$. If we pick $y\in Y$ uniformly at random, then the expectation of $|N(y)\cap S|$ is $r$. Thus intuitively, the vertex set of a ``typical'' clique in $H_{q,k}$ intersects $S$ in $\Theta(r)$ vertices. The following lemma shows that a substantial portion of all cliques are ``typical''. 

\begin{lem}\label{lemma:typical}
Let $S$ be a subset of $X$ with size $|S|=rq$. For $0<\delta<1$, let
$$
Y_{\delta}=\l\{y\in Y~\big|~ (1-\delta)r\le |N(y)\cap S|\le (1+\delta)r\r\}.
$$
Then $|Y_{\delta}|\ge \l(1-\frac{2}{\delta^2r}\r)q^{k-1}$.
\end{lem}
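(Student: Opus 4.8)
The plan is to deduce this from the bipartite expander mixing lemma (\Cref{lemma:expander mixing}) applied to $\Gamma_{q,k-1}$, using the eigenvalue bound $\lambda_2 = q^{k/2-1}$ from \Cref{lemma:incidence_eigenvalue}. Taking $V_1 = X$ and $V_2 = Y$, we have $(d_1,d_2) = (q^{k-2},q)$ and $|X| = q^2$, so for any $T \subseteq Y$, writing $e(S,T) = \sum_{y\in T}|N(y)\cap S|$ and noting $\frac{d_2}{|X|}|S||T| = \frac{q}{q^2}\cdot rq\cdot|T| = r|T|$, \Cref{lemma:expander mixing} becomes
\[
\Bigl|\, e(S,T) - r|T| \,\Bigr| \;\le\; q^{\frac{k}{2}-1}\sqrt{rq|T|}.
\]
(We may assume $r>0$, as otherwise the claimed bound is vacuous.)

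Next I would split the complement $Y\setminus Y_\delta$ as $Y^+ \cup Y^-$, where $Y^+ = \{y : |N(y)\cap S| > (1+\delta)r\}$ and $Y^- = \{y : |N(y)\cap S| < (1-\delta)r\}$, and bound $|Y^+|$ and $|Y^-|$ separately. For $Y^+$, if it is nonempty then by definition $e(S,Y^+) > (1+\delta)r|Y^+|$, while the displayed inequality with $T = Y^+$ gives $e(S,Y^+) \le r|Y^+| + q^{k/2-1}\sqrt{rq|Y^+|}$; subtracting $r|Y^+|$ from both sides and rearranging yields $\delta r|Y^+| < q^{k/2-1}\sqrt{rq|Y^+|}$, and then squaring and dividing by $r|Y^+|$ gives $|Y^+| < q^{k-1}/(\delta^2 r)$. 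Symmetrically, if $Y^-$ is nonempty then $e(S,Y^-) < (1-\delta)r|Y^-|$ combined with the lower bound $e(S,Y^-) \ge r|Y^-| - q^{k/2-1}\sqrt{rq|Y^-|}$ from the displayed inequality leads to exactly the same estimate $|Y^-| < q^{k-1}/(\delta^2 r)$. (If either set is empty its size is $0$ and the bound holds trivially.)

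Finally I would add the two bounds: since $|Y| = q^{k-1}$,
\[
|Y_\delta| \;=\; q^{k-1} - |Y^+| - |Y^-| \;>\; q^{k-1} - \frac{2q^{k-1}}{\delta^2 r} \;=\; \Bigl(1 - \frac{2}{\delta^2 r}\Bigr)q^{k-1},
\]
which is the claim.

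I do not expect a genuine obstacle here; the argument is a direct application of \Cref{lemma:expander mixing}. The only points requiring care are the strict-versus-non-strict inequalities — handled by the fact that $Y_\delta$ is defined via a \emph{closed} interval, so that membership in $Y^+$ or $Y^-$ is a strict inequality — and the trivial cases where $Y^+$ or $Y^-$ is empty. As an aside, one could bypass the spectral input with a second-moment argument: a uniformly random $y\in Y$ has $\E[|N(y)\cap S|] = r$ by double counting, and $\mathrm{Var}(|N(y)\cap S|)\le r$ because any two distinct vertices of $X$ have at most $q^{k-3}$ common neighbours in $Y$; Chebyshev then even gives the slightly stronger $|Y_\delta| \ge (1-\frac{1}{\delta^2 r})q^{k-1}$. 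But since \Cref{lemma:incidence_eigenvalue} has just been proved, the expander mixing route is the natural one to use here.
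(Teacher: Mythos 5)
Your proof is correct and follows exactly the paper's argument: split $Y\setminus Y_\delta$ into $Y_+$ and $Y_-$, apply the bipartite expander mixing lemma with the eigenvalue bound $\lambda_2=q^{k/2-1}$ to each piece, and rearrange to get $|Y_\pm|\le q^{k-1}/(\delta^2 r)$. The extra care about empty sets and strict inequalities, and the aside about a Chebyshev alternative, are fine but not needed.
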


\begin{proof}
Let
$$
Y_+=\l\{y\in Y~\big|~ |N(y)\cap S|> (1+\delta)r\r\}~\text{and}~Y_-=\l\{y\in Y~\big|~ |N(y)\cap S|< (1-\delta)r\r\}.
$$
Apply \Cref{lemma:expander mixing} with $G=\Gamma_{q,k-1}$ and $T=Y_+$. Together with \Cref{lemma:incidence_eigenvalue}, we have
$$
|e(S,Y_+)-\frac{q}{q^2}rq|Y_+||\le q^{\frac{k}{2}-1}\sqrt{rq|Y_+|}.
$$
By definition, $e(S, Y_+)\ge |Y_+|(1+\delta)r$. Thus $\delta r|Y_+|\le q^{\frac{k-1}{2}}\sqrt{r|Y_+|}$, which implies $|Y_+|\le \frac{q^{k-1}}{\delta^2r}$. Similarly, we can show that $|Y_-|\le \frac{q^{k-1}}{\delta^2r}$. Therefore,
$$
|Y_\delta|=|Y|-|Y_+|-|Y_-|\ge \l(1-\frac{2}{\delta^2r}\r)q^{k-1}.
$$
\end{proof}

\section{Balanced Supersaturation}
In this section, we show that $H^{*}_{q,k}$ has balanced supersaturation with positive probability. We need to use the following inequality.
\begin{prop}[Hoeffding-Azuma Inequality~\cite{hoeffding1994probability,azuma1967weighted}]\label{prop:HAIneq}
Let $\lambda\ge0$ and $c_1, \dots, c_k>0$ be reals, and $Z=(Z_1,\dots, Z_k)$ be a martingale with $Z_0=\E[Z]$ and $|Z_i-Z_{i-1}|\le c_i$ for all $i\le k$. Then 
$$
\P[Z-Z_0<-\lambda]\le\exp\l(-\frac{\lambda^2}{2\sum_{i=1}^k c_i^2}\r).
$$
\end{prop}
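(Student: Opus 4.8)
The final statement to prove is the Hoeffding–Azuma inequality (Proposition~\ref{prop:HAIneq}), a classical concentration bound for martingales with bounded increments. Since this is a standard result quoted from \cite{hoeffding1994probability,azuma1967weighted}, the natural approach is to reproduce the textbook proof via the exponential moment method (Chernoff-type argument applied along the martingale filtration), rather than to search for anything clever.

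The plan is as follows. First, fix $\lambda \ge 0$ and a parameter $h > 0$ to be optimized at the end, and consider $\mathbb{E}[e^{-h(Z_k - Z_0)}]$. Write the telescoping decomposition $Z_k - Z_0 = \sum_{i=1}^k D_i$ where $D_i = Z_i - Z_{i-1}$ is the martingale difference, so that $\mathbb{E}[D_i \mid \mathcal{F}_{i-1}] = 0$ and $|D_i| \le c_i$. Conditioning on $\mathcal{F}_{k-1}$ and peeling off the last factor, it suffices to bound $\mathbb{E}[e^{-h D_i} \mid \mathcal{F}_{i-1}]$ for each $i$. The key step is the convexity lemma: since $x \mapsto e^{-hx}$ is convex and $D_i \in [-c_i, c_i]$, we have $e^{-h D_i} \le \frac{c_i - D_i}{2c_i} e^{h c_i} + \frac{c_i + D_i}{2c_i} e^{-h c_i}$ pointwise; taking conditional expectation and using $\mathbb{E}[D_i \mid \mathcal{F}_{i-1}] = 0$ gives $\mathbb{E}[e^{-h D_i} \mid \mathcal{F}_{i-1}] \le \frac{1}{2}(e^{h c_i} + e^{-h c_i}) = \cosh(h c_i) \le e^{h^2 c_i^2 / 2}$, where the last inequality follows from comparing Taylor series. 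Iterating this bound through the tower property yields $\mathbb{E}[e^{-h(Z_k - Z_0)}] \le \exp\bigl(\frac{h^2}{2}\sum_{i=1}^k c_i^2\bigr)$. Then by Markov's inequality applied to the nonnegative random variable $e^{-h(Z_k - Z_0)}$,
$$
\mathrm{Pr}[Z_k - Z_0 < -\lambda] \le \mathrm{Pr}\bigl[e^{-h(Z_k - Z_0)} > e^{h\lambda}\bigr] \le e^{-h\lambda}\exp\Bigl(\frac{h^2}{2}\sum_{i=1}^k c_i^2\Bigr).
$$
Finally, optimize over $h > 0$: the right-hand side is minimized at $h = \lambda / \sum_{i=1}^k c_i^2$, which gives the claimed bound $\exp\bigl(-\lambda^2 / (2\sum_{i=1}^k c_i^2)\bigr)$.

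There is no real obstacle here — the only points requiring minor care are the measurability bookkeeping (ensuring $\mathbb{E}[D_i \mid \mathcal{F}_{i-1}] = 0$ is used correctly when peeling factors, since $e^{-h D_i}$ is not $\mathcal{F}_{i-1}$-measurable but the product of the earlier factors is) and the elementary inequality $\cosh(t) \le e^{t^2/2}$, which follows from $\cosh(t) = \sum_{j \ge 0} t^{2j}/(2j)! \le \sum_{j \ge 0} t^{2j}/(2^j j!) = e^{t^2/2}$ since $(2j)! \ge 2^j j!$. Given that the statement is invoked as a black-box tool from the cited literature, in the actual paper one would likely just cite it; but if a self-contained proof is desired, the argument above is complete and elementary.
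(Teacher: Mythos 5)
The paper does not prove this proposition --- it is quoted as a black box from the cited literature --- so there is no internal proof to compare against. Your argument is the standard and correct exponential-moment proof of the Hoeffding--Azuma inequality: the convexity bound $\E[e^{-hD_i}\mid \mathcal{F}_{i-1}]\le \cosh(hc_i)\le e^{h^2c_i^2/2}$, the tower-property peeling, Markov's inequality, and the optimization $h=\lambda/\sum_i c_i^2$ are all handled correctly (including the degenerate case $\lambda=0$, where the bound is trivially $1$).
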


Recall that $H^*_{q,k}$ is the $k$-uniform hypergraph obtained by replacing each maximal clique of $H_{q,k}$ with a random complete $k$-partite $k$-graph on the same vertex set. Concretely, for each $y\in Y$, we color the vertices in $N(y)$ with $k$ colors $\{1,\dots,k\}$ uniformly at random, and for each $1\le i\le k$ we let $X_{y,i}\subseteq N(y)$ be the set of vertices with color $i$, and then we replace the clique on $N(y)$ with a complete $k$-partite $k$-graph on $N(y)$ with $k$-partition $X_{y,1}\sqcup\dots\sqcup X_{y,k}$. Note that the colorings for different cliques are independent.

Given a $k$-graph $H$, let $\Delta_i(H)$ denote the maximum integer such that there exists $S\subseteq V(H)$ such that $|S|=i$ and the number of edges containing $S$ is $\Delta_i(H)$.

\begin{lem}\label{lemma:balanced_supsat}
For $q$ sufficiently large in terms of $k$, with positive probability, every $S\subseteq X$ with $|S|\ge 4kq$ satisfies the following. There exists a subgraph $H\subset H^*_{q,k}[S]$ such that, for all $1\le i\le k$,
$$
\Delta_i(H)\le \frac{6(16k)^{2k}|E(H)|}{|S|}\l(\frac{q^{\frac{1}{k-1}}}{|S|}\r)^{i-1}. 
$$
\end{lem}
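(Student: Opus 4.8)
The plan is to establish balanced supersaturation for a single large set $S \subseteq X$ with $|S| = rq \geq 4kq$ (so $r \geq 4k$), and then to take a union bound over all such $S$. Fix $S$ with $|S| = rq$. By \Cref{lemma:typical}, applied with a constant $\delta$ (say $\delta = \tfrac12$), all but a $\tfrac{2}{\delta^2 r} \le \tfrac{8}{r} \le \tfrac{2}{k}$ fraction of the cliques $N(y)$ intersect $S$ in between $r/2$ and $3r/2$ vertices; call this family of "typical" $y$'s $Y_\delta$, so $|Y_\delta| \ge (1-\tfrac{2}{k})q^{k-1}$. I would build $H$ as follows: for each $y \in Y_\delta$, after the random $k$-coloring of $N(y)$ producing parts $X_{y,1},\dots,X_{y,k}$, include in $H$ all edges of the complete $k$-partite $k$-graph on $(N(y)\cap S)$ induced by this coloring, i.e. all $k$-sets $\{x_1,\dots,x_k\}$ with $x_j \in X_{y,j}\cap S$. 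So $H \subseteq H^*_{q,k}[S]$ by construction.

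The first key step is a lower bound on $|E(H)|$ that holds with high probability. For a fixed typical $y$, writing $s_y = |N(y)\cap S| \in [r/2, 3r/2]$, the expected number of transversal $k$-sets in the random coloring is $\prod_j \E|X_{y,j}\cap S|$-type quantity; more precisely the expected number of rainbow $k$-tuples is $s_y! / (k^{s_y}\cdots)$, but the clean estimate is that the expected number of $k$-partite edges on $s_y$ vertices colored uniformly with $k$ colors is at least $c_k s_y^k$ for a constant $c_k>0$ (e.g. $k^{-k}\binom{s_y}{k}k!$ up to lower-order terms, using $r \ge 4k$ so $s_y \ge 2k$). Since $|E(H)| = \sum_{y \in Y_\delta} (\text{edges from } y)$ is a sum of independent contributions each bounded by $\binom{s_y}{k} \le (3r/2)^k$, I would apply the Hoeffding–Azuma inequality (\Cref{prop:HAIneq}) — or just Azuma on the edge-exposure martingale over the $|Y_\delta|$ independent colorings, with increments bounded by $(3r/2)^k$ — to conclude that with probability $\ge 1 - \exp(-\Omega(q^{k-1}/r^k))$ we have $|E(H)| \ge \tfrac12 \E|E(H)| \ge c_k' r^k \cdot q^{k-1} = c_k' \cdot \tfrac{|S|^k}{q}$.

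The second key step is the degree bound. Fix $i \in \{1,\dots,k\}$ and a set $T \subseteq S$ with $|T| = i$. The number of edges of $H$ containing $T$ is at most the number of $y \in Y$ with $T \subseteq N(y)$, times the maximum number of edges through $T$ in a single clique. For the first factor: if $i \ge 2$, property (iii) of \Cref{proposition:Gamma} gives $d(y_1,y_2) \le k-2$, so any two vertices lie in at most $k-2$ common cliques, hence $T$ lies in at most $k-2$ cliques; for $i = 1$ a single vertex lies in exactly $q^{k-2}$ cliques. For the second factor, inside one clique $N(y)$ with $|N(y)\cap S| \le 3r/2$, the number of $k$-partite edges containing a fixed $i$-set $T$ is at most $(3r/2)^{k-i}$ if $T$ is rainbow and $0$ otherwise. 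Combining: for $i \ge 2$, $\Delta_i(H) \le (k-2)(3r/2)^{k-i}$; for $i=1$, $\Delta_1(H) \le q^{k-2}(3r/2)^{k-1}$. I would then check that both bounds are dominated by the target $\tfrac{6(16k)^{2k}|E(H)|}{|S|}\bigl(\tfrac{q^{1/(k-1)}}{|S|}\bigr)^{i-1}$ using $|E(H)| \ge c_k' |S|^k/q$ and $|S| = rq \ge q^{1+1/(k-1)}$ exactly when $r \ge q^{1/(k-1)}$ — wait, this is the subtle point: for $|S|$ close to $4kq$ the RHS involves $(q^{1/(k-1)}/|S|)^{i-1}$ which is tiny, so I must verify the inequality carefully in the regime $4kq \le |S| \le q^{1+o(1)}$; this is where the large constant $(16k)^{2k}$ is spent, and checking all cases here — particularly reconciling the $i=1$ term $q^{k-2}(3r/2)^{k-1}$ against $c_k' r^{k-1} q^{k-2}$ — is the main technical obstacle.

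Finally, the union bound: the number of sets $S \subseteq X$ of size $rq$ is at most $2^{|X|} = 2^{q^2}$, and the failure probability for each is $\exp(-\Omega(q^{k-1}/r^k))$. Since $r^k \le q^k$ gives $q^{k-1}/r^k$ possibly as small as $q^{-1}$, this naive bound is far too weak — so I would instead note that the coloring randomness is shared across all $S$, and run a single martingale over the $q^{k-1}$ independent clique-colorings establishing simultaneously for every $S$ the lower bound on $|E(H_S)|$; the deviation needed, $\tfrac12\E|E(H_S)| \gtrsim r^k q^{k-1} \gtrsim (4k)^k q^{k-1}$, against increment bound $(3r/2)^k$ and $q^{k-1}$ steps, yields failure probability $\exp(-\Omega(q^{k-1}))$ for a fixed $S$, which beats $2^{q^2}$ once $k \ge 3$ and $q$ is large. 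The degree bounds are deterministic given the coloring, so no union bound is needed there. This shows the desired event holds with positive probability, completing the proof.
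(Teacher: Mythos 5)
Your overall strategy (restrict to typical cliques via \Cref{lemma:typical}, lower-bound $|E(H)|$ by Azuma, bound the degrees deterministically, union bound over $S$) is the same as the paper's, but two steps as you execute them are genuinely problematic.

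First, the union bound. You run the martingale by exposing whole cliques, so the number of steps is $|Y_\delta|\le q^{k-1}$ with increments up to $(3r/2)^k$, and the deviation you need is $\Theta_k(r^kq^{k-1})$; the $r$'s cancel and Azuma gives failure probability $\exp(-c_kq^{k-1})$ where $c_k$ is of order $(Ck)^{-2k}$. Against $2^{q^2}$ choices of $S$ this works only if $c_kq^{k-1}>q^2\ln 2$, which fails for $k=3$ (the exponents are both $q^2$ and $c_3\ll\ln 2$) --- and $k=3$ is the case needed for $F_5$. Your suggestion to run ``a single martingale establishing simultaneously for every $S$'' does not resolve this: $|E(H_S)|$ is a different random variable for each $S$, so you still need one concentration statement per $S$. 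The fix (which is what the paper does) is to expose the colors vertex by vertex rather than clique by clique: then there are $\Theta(rq^{k-1})$ relevant steps with increments only $\binom{3r/2-1}{k-1}\le(2r)^{k-1}$, and the exponent becomes $\Omega_k(rq^{k-1})=\Omega_k(|S|\,q^{k-2})$. Since $q^{k-2}\gg\log q$, this beats $\binom{q^2}{|S|}\le\exp(2|S|\log q)$ for every $k\ge3$ and every size $|S|$.

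Second, the codegree count for $i\ge2$ misreads \Cref{proposition:Gamma}(iii): that statement bounds $d(y_1,y_2)$ for $y_1,y_2\in Y$, i.e.\ the size of the intersection of two cliques, not the number of cliques through a fixed pair of vertices of $X$. The number of $y\in Y$ with a fixed $i$-set $J\subseteq X$ in $N(y)$ is (at most) $q^{k-1-i}$ --- the number of polynomials of degree at most $k-2$ through $i$ prescribed points --- which for $k\ge4$ and $i=2$ is $q^{k-3}\gg k-2$. So your claimed bound $\Delta_i(H)\le(k-2)(3r/2)^{k-i}$ is false; the correct bound is $\Delta_i(H)\le q^{k-1-i}(2r)^{k-i}$. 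You also explicitly leave the comparison of these bounds with the target unverified. With the correct count the verification does go through (this is where the factor $(16k)^{2k}$ and the exponent $q^{(i-1)/(k-1)}$ are spent), but as written this part of your argument is both incorrect for $k\ge4$ and incomplete.
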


\begin{proof}
For a fixed $S\subseteq X$ with $|S|\ge 4kq$, let $r=|S|/q\ge 4k\ge 12$ and let
$$
Y_{1/2}=\l\{y\in Y~\big|~ r/2\le |N_{\Gamma_{q,k-1}}(y)\cap S|\le 3r/2\r\}.
$$
By \Cref{lemma:typical} we have $|Y_{1/2}|\ge q^{k-1}/3$. 

Let $H$ be a subgraph of $H^*_{q,k}[S]$ with edge set
$$
E(H)=\l\{e\in E(H^*_{q,k}[S])~\big|~\exists y\in Y_{1/2}\text{~such that~}e\in N(y)\r\}.
$$
In other words, $H$ contains only edges that are in the ``typical'' cliques. Define the random variable $Z=|E(H)|$. For all $y\in Y_{1/2}$ and $v\in N_{\Gamma_{q,k-1}}(y)$, let $A_{y,v}$ be the random variable with values in $\{1,\dots,k\}$ such that $A_{y,v}=i$ if vertex $v$ receives color $i$ in the clique on $N_{\Gamma_{q,k-1}}(y)$. Let $B_1,\dots,B_t$ be an arbitrary order of $A_{y,v}$ for $y\in Y_{1/2}$ and $v\in N_{\Gamma_{q,k-1}}(v)$. Define a martingale $Z_0,\dots,Z_t$ where $Z_0=\E(Z)$ and $Z_i=\E(Z|B_1,\dots,B_i)$ for $1\le i\le t$. Observe that changing the color of a vertex $v$ in a typical clique will only affect the number of edges containing $v$ in that clique, which is at most $\binom{3r/2-1}{k-1}$, since a typical clique has size at most $3r/2$. Thus $|Z_i-Z_{i-1}|\le \binom{3r/2-1}{k-1}\le (2r)^{k-1}$.  Note that for any $k$ vertices in a typical clique, the probability that they form an edge in $H$ is $\frac{k\,!}{k^k}$. Hence, by linearity of expectation and the fact that a typical clique has size at least $r/2$ and that $r/2-k\ge r/4$ , we have
$$
\E(Z)\ge |Y_{1/2}|\binom{r/2}{k}\frac{k\,!}{k^k}\ge \frac{q^{k-1}}{3}\frac{(r/2-k)^k}{k^k} \ge \frac{r^kq^{k-1}}{3(4k)^k}.
$$
Thus by \Cref{prop:HAIneq} (Hoeffding-Azuma Inequality) with $\lambda=\frac{r^kq^{k-1}}{6(4k)^k}$ and $c_i=(2r)^{k-1}$,  we have
$$
\Pr\l(Z\le \frac{r^kq^{k-1}}{6(4k)^k}\r)\le \exp\l(-\frac{(\frac{r^kq^{k-1}}{6(4k)^k})^2}{2(3rq^{k-1}/2)((2r)^{k-1})^2}\r)\le\exp\l(-\frac{rq^{k-1}}{500(8k)^{2k}}\r).
$$
Using the union bound, the probability that there exists an $S\subseteq X$ with $|S|=s=rq\ge 4kq$ such that $Z\le \frac{r^kq^{k-1}}{6(4k)^k}$ is at most
$$
\sum_{s=4kq}^{q}\binom{q^2}{s}\exp\l(-\frac{sq^{k-2}}{500(8k)^{2k}}\r)\le \sum_{s=1}^{q}\exp\l(-\frac{sq^{k-2}}{1000(8k)^{2k}}\r)<1
$$
given that $q$ is sufficiently large in terms of $k$.

Hence with positive probability, for every $S\subseteq X$ with $|S|=rq\ge 4kq$, the corresponding $H$ satisfies $|E(H)|\ge\frac{r^kq^{k-1}}{6(8k)^{2k}}$. Let $J\subseteq S$ be such that $|J|=i$ and $1\le i\le r-1$. Note that the number of $y$ such that $J\subseteq N_{\Gamma_{q,k-1}}(y)$ is at most $q^{k-1-i}$, and for each such $y\in Y_{1/2}$ the number of edges in $N_{\Gamma_{q,k-1}}(y)\cap S$ containing $J$ is at most $\binom{3r/2-i}{k-i}\le (2r)^{k-i}$. Hence we have
$$
\Delta_i(H)\le (2r)^{k-i}q^{k-1-i}.
$$
In particular, we know that $\Delta_k(H)\le 1$. Combining the inequalities above we have for all $1\le i\le k$,
$$
\Delta_i(H)\le \frac{6(16k)^{2k}|E(H)|}{|S|}\l(\frac{q^{\frac{1}{k-1}}}{|S|}\r)^{i-1}. 
$$
\end{proof}

\section{Counting independent sets}
We make use of the hypergraph container method developed independently by Balogh, Morris and Samotij~\cite{balogh2015independent} and Saxton and Thomason~\cite{saxton2015hypergraph}. Here we make use of the following simplified version of  Theorem 1.5 in~\cite{morris2024asymmetric}:
\begin{thm}[Theorem 1.5, \cite{morris2024asymmetric}]\label{theorem:container}
For every integer $r \ge 2$, there exists a constant $\epsilon>0$ such that the following holds.
Let $B,L\ge 1$ be positive integers and let ${H}$ be a $k$-graph satisfying
    \begin{equation}\label{equation:container}
        \Delta_i({H})\leq \frac{|E(H)|}{L}\l(\frac{B}{|V(H)|}\r)^{i-1},~~\forall 1\le i\le  r.
    \end{equation}
    
Then there exists a collection $\mathcal{C}$ of subsets of $V(H)$ such that:
    \begin{enumerate}[align=left]
        \item[{\rm (i)}] For every independent set $I$ of $H$, there exists $C\in \mathcal{C}$ such that $I\subset C$;
        \item[{\rm (ii)}] For every $C\in\mathcal{C}$, $|C|\leq |V(H)|-\epsilon L$; 
        \item[{\rm (iii)}] We have \[|\mathcal{C}|\le \exp\l(\frac{\log\l(\frac{|V(H)|}{B}\r)B}{\epsilon}\r).\]
    \end{enumerate}
\end{thm}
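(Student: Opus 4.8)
The plan is to obtain Theorem~\ref{theorem:container} as a parameter-specialization of the general hypergraph container theorem of \cite{morris2024asymmetric}, rather than to reprove the container method from scratch: no genuinely new idea is needed beyond choosing $B$ and $L$ correctly and reading off the conclusion. Recall the shape of any such statement. From a $k$-graph $H$ whose edge distribution is sufficiently ``spread'' --- spreadness being measured precisely by the co-degree functions $\Delta_1(H),\dots,\Delta_r(H)$ --- one runs a deterministic container algorithm that attaches to every independent set $I$ a \emph{fingerprint} $S=S(I)\subseteq I$ and a \emph{container} $C=C(S)\supseteq I$, where $|S|$ is small and $|C|$ sits well below $|V(H)|$. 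Taking $\mathcal{C}=\{C(S): S \text{ an admissible fingerprint}\}$, conclusion (i) is immediate from $I\subseteq C(S(I))$, conclusion (ii) is the size deficit of each individual container, and conclusion (iii) follows by counting admissible fingerprints.

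Concretely, I would match \eqref{equation:container} to the spreadness hypothesis of \cite{morris2024asymmetric} by reading $L$ as the quantity governing how far the algorithm shrinks the ground set --- a single pass yielding containers of size at most $|V(H)|-\epsilon L$ with $\epsilon=\epsilon(r)$, which is (ii) --- and reading $B$ as the quantity controlling fingerprint size: the normalization $\Delta_i(H)\le \tfrac{|E(H)|}{L}\bigl(\tfrac{B}{|V(H)|}\bigr)^{i-1}$ is exactly what forces $|S(I)|=O_\epsilon(B)$, so that $|\mathcal{C}|\le \sum_{j\le O_\epsilon(B)}\binom{|V(H)|}{j}\le \exp\bigl(\tfrac{1}{\epsilon}B\log\tfrac{|V(H)|}{B}\bigr)$, which is (iii). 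Writing this out is essentially bookkeeping: one checks that the constant $\epsilon(r)$ produced by the algorithm can be taken (after shrinking if necessary) to serve simultaneously for the deficit in (ii) and for the fingerprint bound implicit in (iii).

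The one point worth checking carefully --- and the only place where the exact form of \eqref{equation:container} matters --- is that the division by $L$ there is indeed the hypothesis under which \cite{morris2024asymmetric} delivers a ground-set deficit of order $L$, as opposed to merely a ``sparse'' container (one with few edges), which is the conclusion of the more classical one-step container lemma; I expect this to be immediate from their statement. If one instead insisted on a self-contained proof, the real obstacle would be reproving the container lemma itself --- the greedy, degree-based selection of the fingerprint together with the argument that the uncovered vertices form a small container --- which is standard but lengthy. Since \cite{morris2024asymmetric} already states the result in exactly the form needed here, I would cite it and confine the write-up to the verification above.
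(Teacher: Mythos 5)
Your proposal matches the paper's treatment: this theorem is imported verbatim (as a "simplified version of Theorem 1.5") from \cite{morris2024asymmetric}, and the paper offers no independent proof beyond the citation. Your parameter-matching discussion of the roles of $B$ and $L$ is a reasonable sanity check, but no further argument is needed or given in the paper.
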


Next, we use \Cref{theorem:container} together with \Cref{lemma:balanced_supsat} to count the number of independent sets of size $q^{\frac{1}{k-1}}(\log q)^2$ in $H^*_{q,k}$.

\begin{thm}\label{theorem:count_indset}
For every $k\ge 3$, there exists a constant $c>0$ such that, when $q$ is sufficiently large, we can fix an instance of $H^*_{q,k}$ such that the number of independent sets of size $t=q^{\frac{1}{k-1}}(\log q)^2$ of $H^*_{q,k}$ is at most
$$
\l(\frac{cq}{t}\r)^t.
$$
\end{thm}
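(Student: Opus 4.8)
The idea is to apply the container theorem (Theorem~\ref{theorem:container}) iteratively to the hypergraph $H^*_{q,k}$, using the balanced supersaturation guaranteed by Lemma~\ref{lemma:balanced_supsat}, until every container is small enough to contain no independent set of size $t$. First I would fix an instance of $H^*_{q,k}$ for which the conclusion of Lemma~\ref{lemma:balanced_supsat} holds (this happens with positive probability). Then, given any set $S\subseteq X=V(H^*_{q,k})$ with $|S|\ge 4kq$, Lemma~\ref{lemma:balanced_supsat} produces a subgraph $H\subset H^*_{q,k}[S]$ with
\[
\Delta_i(H)\le \frac{6(16k)^{2k}|E(H)|}{|S|}\Bigl(\frac{q^{1/(k-1)}}{|S|}\Bigr)^{i-1}
\]
for all $1\le i\le k$. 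Matching this against \eqref{equation:container} with $r=k$, I would set $B=\Theta(q^{1/(k-1)})$ and $L=\Theta(|S|)$ (with the implied constants depending only on $k$, absorbing the $6(16k)^{2k}$ factor into $B$ by taking $B = C_k q^{1/(k-1)}$). Applying Theorem~\ref{theorem:container} to $H$ (noting that independent sets of $H^*_{q,k}[S]$ are independent in $H$) yields a family $\mathcal C_S$ of containers, each of size at most $|S| - \epsilon L \le |S|(1-\epsilon')$ for some $\epsilon'=\epsilon'(k)>0$, with $|\mathcal C_S|\le \exp(O(B\log(|S|/B)))\le \exp(O(q^{1/(k-1)}\log q))$.

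The next step is the iteration. Starting from the trivial container $X$ of size $q^2$, repeatedly apply the above to every current container of size $\ge 4kq$. Each application shrinks a container by a constant factor $1-\epsilon'$, so after $O(\log q)$ rounds every container has size $\le 4kq$. In round $j$ we multiply the number of containers by at most $\exp(O(q^{1/(k-1)}\log q))$, and since there are $O(\log q)$ rounds, the final family $\mathcal C$ has
\[
|\mathcal C|\le \exp\bigl(O(q^{1/(k-1)}(\log q)^2)\bigr).
\]
Every independent set of $H^*_{q,k}$ (in particular every independent set of size $t$) is contained in some $C\in\mathcal C$ with $|C|\le 4kq$. Hence the number of independent sets of size $t$ is at most
\[
|\mathcal C|\binom{4kq}{t}\le \exp\bigl(O(q^{1/(k-1)}(\log q)^2)\bigr)\cdot\Bigl(\frac{O(q)}{t}\Bigr)^t.
\]
Since $t=q^{1/(k-1)}(\log q)^2$, the prefactor $\exp(O(q^{1/(k-1)}(\log q)^2)) = \exp(O(t))$ is absorbable into the base of $((cq/t))^t$ by enlarging $c$, giving the claimed bound $(cq/t)^t$.

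There are two technical points requiring care. First, in each round the relevant parameter $L$ is proportional to the size of the \emph{current} container, not a fixed value, so the geometric decay $|S|\mapsto |S|(1-\epsilon')$ must be tracked uniformly; since $\epsilon'$ depends only on $k$ this is fine, but one should check the containers never drop below the threshold $4kq$ prematurely in a way that breaks the recursion — one simply stops refining a container once it reaches size $<4kq$. Second, and this is the main obstacle, one must verify that $B=\Theta(q^{1/(k-1)})$ is a legitimate choice throughout: Theorem~\ref{theorem:container} requires $B,L\ge 1$ integers and implicitly $B\le |V(H)|$, and the container size bound (iii) involves $\log(|V(H)|/B)$, which stays $O(\log q)$ as long as the container size is between $4kq$ and $q^2$. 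The balanced supersaturation lemma was stated precisely to make $B$ independent of $|S|$ while $L$ scales with $|S|$, which is exactly what makes the product over $O(\log q)$ rounds come out to $\exp(O(t))$ rather than something larger; I would double-check that the constant hidden in $B$ (namely roughly $(16k)^{2k}$, raised appropriately) only affects the final constant $c$ and not the exponent. Assembling these estimates and choosing $c$ large enough in terms of $k$ completes the proof.
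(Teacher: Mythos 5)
Your proposal is correct and follows essentially the same route as the paper: fix an instance with balanced supersaturation, apply the container theorem to each large vertex subset with $B=\Theta(q^{1/(k-1)})$ and $L=\Theta(|S|)$, iterate $O(\log q)$ times down to containers of size $O(q)$, and absorb the resulting $\exp(O(t))$ container count into $(cq/t)^t$. Your side remarks (applying the container theorem to the subgraph $H$ from the supersaturation lemma and noting that independent sets of $H^*_{q,k}[S]$ remain independent in $H$, and the role of $B$ being independent of $|S|$) match the intended argument.
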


\begin{proof}
By \Cref{lemma:balanced_supsat}, we can fix an instance of $H^*_{q,k}$ such that for every $S\subset V(H^*_{q,k})$ with $|S|\ge 4kq$ there exists a subgraph $H$ of $H^*_{q,k}[S]$ such that for all $1\le i\le k$,
\begin{equation}\label{equation:balanced}
\Delta_i(H)\le \frac{6(16k)^{2k}|E(H)|}{|S|}\l(\frac{q^{\frac{1}{k-1}}}{|S|}\r)^{i-1}.    
\end{equation}

We will first prove the following claim.
\begin{claim}\label{claim:onestep_container}
    There exists a constant $\epsilon>0$ such that for every $S\subset V(H^*_{q,k})$ with $|S|>4kq$, there exists a collection $\C$ of at most
    $$
    \exp\l(\frac{\log q\cdot q^{\frac{1}{k-1}}}{\epsilon}\r)
    $$
    subsets of $S$ such that:
    \begin{itemize}[align=left]
        \item[{\rm (i)}]
        For every independent set $I$ of $H^*_{q,k}[S]$, there exists $C\in\C$ such that $I\subset C$;
        \item[{\rm (ii)}] For every $C\in\C$, $|C|\le (1-\epsilon)|S|$.
    \end{itemize}
\end{claim}
\begin{proof}
Fix an arbitrary $S\subset V(H^*_{q,k})$ with $|S|\ge 4kq$. By \Cref{equation:balanced}, it is easy to check that equation~(\ref{equation:container}) holds for $H=H^*_{q,k}[S]$, $L=\frac{|S|}{6(16k)^{2k}}$ and $B=q^{\frac{1}{k-1}}$. Hence by Theorem~\ref{theorem:container}, there exist a constant $\epsilon$ (not depending on $S$) and a collection $\C$ of subsets of $S$ with the desired properties.
\end{proof}

Now we apply \Cref{claim:onestep_container} iteratively as follows. Fix the constant $\epsilon$ guaranteed by \Cref{claim:onestep_container}. Let $\C_0=\{V(H^*_{q,k})\}$. Let $t_0=|V(H^*_{q,k})|=q^2$ and let $t_i=(1-\epsilon)t_{i-1}$ for all $i\ge 1$. Let $m$ be the smallest integer such that $t_m\le 4kq$. Clearly $m=O(\log q)$. Given a set of containers $\C_i$ such that every $C\in\C_i$ satisfies $|C|\le t_i$, we construct $\C_{i+1}$ as follows: for every $C\in \C_i$, if $|C|\le t_{i+1}$, then we put it into $\C_{i+1}$; otherwise, if $|C|> t_{i+1}$, by \Cref{claim:onestep_container}, there exists a collection $\C'$ of containers for $H^*_{q,k}[C]$ such that every $C'\in\C'$ satisfies $|C'|<(1-\epsilon)|C|\le t_{i+1}$~--- now we put every element of $\C'$ into $\C_{i+1}$. Let $\C=\C_m$. Note that
$$
\frac{|\C_i|}{|\C_{i-1}|}\le \exp\l(\frac{\log q\cdot q^{\frac{1}{k-1}}}{\epsilon}\r).
$$
Thus
$$
|\C_m|=\prod^m_{i=1}\frac{|\C_i|}{|\C_{i-1}|}\le \exp\l(m\frac{\log q\cdot q^{\frac{1}{k-1}}}{\epsilon}\r).
$$
As $m=O(\log q)$, we conclude that
there exists a constant $c_1>0$ such that
$$
|\C|=|\C_m|\le \exp\l(c_1(\log q)^2q^{\frac{1}{k-1}}\r).
$$
Also, by definition, we have $|C|\le 4kq$ for every $C\in\C$.

Recall that $t=(\log q)^2 q^{\frac{1}{k-1}}$ and let $N_t$ be the number of independent set of $H$ of size $t$. Since every independent set of $H$ of size $t$ is contained in some $C\in\C$, we have, for some constant $c_2>0$,
$$
N_t\le |\C|\binom{4kq}{t}\le \l(\frac{c_2q}{t}\r)^t.
$$
\end{proof}

\section{Proof of Theorem~\ref{theorem:main}}

\begin{proof}[Proof of Theorem~\ref{theorem:main}]
For every sufficiently large prime power $q$, we let $t=(\log q)^2q^{\frac{1}{k-1}}$. By \Cref{theorem:count_indset} we can fix an instance of $H^*_{q,k}$ such that the number of independent sets of $H^*_{q,k}$ of size $t$ is at most
$$
\l(\frac{c_1q}{t}\r)^t.
$$ 
for some constant $c_1>0$. Let $W$ be a random subset of $V(H^*_{q,k})$ where each vertex is sampled independently with probability $p=\frac{t}{c_1q}$. Note that $p<1$ as $q$ is sufficiently large. Then the expected number of independent set of size $t$ in $H^*_{q,k}[W]$ is at most
$$
\l(\frac{c_1q}{t}\r)^tp^t\le 1.
$$

Let $W'\subseteq W$ be obtained by arbitrarily deleting one vertex in each independent set of size $t$. Thus the expectation of $|W'|$ is at least
$$
pq^2-1=\frac{(\log q)^2}{c_1}q^{\frac{k}{k-1}}-1.
$$

Hence there exists a choice $W'$ with at least this many vertices. Let $H'=H^*_{q,k}[W']$. By definition of $W'$, we have $\alpha(H')\le t$. Moreover, by \Cref{proposition:F-free} we know that $H'$ is $F$-free. Thus, we have
$$
r(F,t)\ge \frac{(\log q)^2}{c_1}q^{\frac{k}{k-1}}.
$$
Recall that $t=(\log q)^2q^{\frac{1}{k-1}}$. It is well-known that for every integer $n$ there exists a prime $q$ such that $n/2\le q\le n$. Thus for every $n$ sufficiently large, it is easy to find a prime $q$ such that
$$
(\log q)^2q^{\frac{1}{k-1}}\le n\le 2(\log q)^2q^{\frac{1}{k-1}}.
$$
Therefore we conclude that there exists a constant $c_2>0$ such that for all $n$ sufficiently large,
$$
r(F,n)\ge \frac{c_2n^k}{(\log n)^{2k-2}}.
$$
\end{proof}

\bibliographystyle{abbrv}
\bibliography{refs}

\end{document}